\documentclass[11pt]{amsart}
\usepackage{latexsym}
\usepackage{amstext}
\usepackage{amsmath}
\usepackage{amssymb}
\usepackage{amsthm}
\usepackage{url}
\usepackage{enumitem}

\newtheorem{theorem}{Theorem}[section]
\newtheorem{lemma}[theorem]{Lemma}
\newtheorem{proposition}[theorem]{Proposition}

\newtheorem{theorema}[theorem]{Theorem}
\newtheorem{theoremb}{Theorem}[section]
\newtheorem{corollarya}[theorem]{Corollary}
\newtheorem{corollaryb}[theoremb]{Corollary}

\theoremstyle{definition}
\newtheorem{definition}[theorem]{Definition}
\newtheorem{remark}[theorem]{Remark}
\newtheorem{example}[theorem]{Example}

\def \co {\mathcal{O}}
\def \Ocal {\mathcal{O}}

\def\QQ{\mathbb Q}

\def\k{\mathbf k}

\DeclareMathOperator{\dv}{div}

\DeclareMathOperator{\Pic}{Pic}
\DeclareMathOperator{\chr}{char}

\renewcommand{\theenumi}{(\alph{enumi})}

\begin{document}
\title[On non-Archimedean curves omitting few components]{On non-Archimedean curves omitting few components and their arithmetic analogues}

\author[A. Levin]{Aaron Levin}
\address{Department of Mathematics\\Michigan State University\\East Lansing, MI 48824}
\email{adlevin@math.msu.edu}
\author[J. T.-Y. Wang]{Julie Tzu-Yueh Wang}
\address{Institute of Mathematics\\Academia Sinica\\No.\ 1, Sec.\ 4, Roosevelt Road\\
Taipei 10617\\TAIWAN}
\email{jwang@math.sinica.edu.tw}
\date{}
\begin{abstract}
Let $\k$ be an algebraically closed field complete with respect to a non-Archimedean absolute value of arbitrary characteristic.
Let $D_1,\ldots, D_n$ be effective nef divisors intersecting transversally in an $n$-dimensional nonsingular projective variety  $X$.   
We study the degeneracy of non-Archimedean analytic maps from $\k$ into $X\setminus \cup_{i=1}^nD_i$  under various geometric conditions.  When $X$ is a rational ruled surface and $D_1$ and $D_2$ are ample, we obtain a necessary and sufficient condition such that
there is no  non-Archimedean analytic map from $\k$ into $X\setminus D_1 \cup D_2$.
Using the dictionary between non-Archimedean Nevanlinna theory and Diophantine approximation that originated in \cite{ALW}, we also study arithmetic analogues of these problems, establishing results on integral points on these varieties over $\mathbb{Z}$ or the ring of integers of an imaginary quadratic field.
\end{abstract}
\thanks{2010\ {\it Mathematics Subject Classification}: Primary 11J97; Secondary 32P05, 32H25}
\keywords{non-Archimedean Picard theorem, non-Archimedean analytic curves, integral points}
\thanks{The first named author was supported in part by NSF grants DMS-1102563 and DMS-1352407.  The second named author was supported in part by Taiwan's MOST grant 103-2115-M-001-002-MY3.}
\normalsize
\baselineskip=17pt

\maketitle

 \section{Introduction}

Let $\k$ be an algebraically closed field complete with respect to a non-Archimedean absolute value of arbitrary characteristic.  Our primary object of study is the degeneracy of non-Archimedean analytic maps from $\k$ to an $n$-dimensional projective variety $X$ omitting an effective divisor with at least $n$ irreducible components.  As argued in \cite{ALW}, results on non-Archimedean analytic curves in this context should have arithmetic counterparts in Diophantine geometry; they should correspond to results on integral points over $\mathbb{Z}$ or the ring of integers of an imaginary quadratic field.  Thus, a second objective is to prove an appropriate arithmetic analogue of all of our results on non-Archimedean analytic curves, further illustrating and justifying the correspondence proposed in \cite{ALW}.  Before discussing our main results, we briefly recall some aspects of the correspondence in \cite{ALW} as well as several examples of parallel non-Archimedean and arithmetic results.

To begin, we recall the connection between Nevanlinna theory, the quantitative theory that grew out of Picard's theorem, and Diophantine approximation, a quantitative theory behind many results on rational and integral points on varieties.  Originating in the work of Osgood, Vojta, and Lang, it has been observed that there is a striking correspondence between many statements in Nevanlinna theory and statements in Diophantine approximation.  A detailed ``dictionary" between the two subjects has been constructed by Vojta \cite{Vo}.  Qualitatively, in the simplest case, holomorphic curves in a variety $X$ should correspond to infinite sets of integral points on $X$ (viewing $X$ both as a variety over a number field and as a complex analytic space).  Taking $X=\mathbb{G}_m=\mathbb{P}^1\setminus \{0,\infty\}$, this implies that a non-constant entire function without zeros is analogous to an infinite set of units in some ring of integers. 

In contrast to the situation in complex analysis, where the exponential function is an entire function without zeros, it is an easy fact that a non-Archimedean entire function without zeros must be constant.  In view of the aforementioned analogies, this suggests that to obtain a Diophantine analogue of non-Archimedean analytic curves, we should consider only rings of integers with a finite unit group, i.e., $\mathbb{Z}$ or the ring of integers of an imaginary quadratic field.  This observation was the starting point for work in \cite{ALW}, where more generally it was argued that, at least for certain classes of varieties, a non-constant non-Archimedean analytic map into a variety $X$ should correspond to an infinite set of $\co_k$-integral points on $X$, where $\co_k=\mathbb{Z}$ or the ring of integers of an imaginary quadratic field.  For completeness, we also mention that this correspondence can frequently be made quantitative (see \cite{ALW}), resulting in parallel statements in non-Archimedean Nevanlinna theory and Diophantine approximation.

Determining the precise class of varieties under which a correspondence should hold was left open in \cite{ALW}.  At the least, it seems necessary that the varieties $X$ be affine (or close to affine) and that, in the arithmetic case, the varieties satisfy a rationality condition on the components at infinity:
 
\begin{enumerate}[label= {($\ast$)}]
\item \label{cond1} There exists a projective closure $\tilde{X}$ of $X$ nonsingular at every point in $\tilde{X}\setminus X$ 
and such that every (geometric) irreducible component of $\tilde{X}\setminus X$ is defined over $k$,        \end{enumerate}
where $k=\mathbb{Q}$ or an imaginary quadratic field.  We now illustrate the correspondence by recalling several examples of parallel non-Archimedean and arithmetic results.

For curves, the condition \ref{cond1} yields a sufficient hypothesis under which our correspondence holds (see Section \ref{prelim} for the definitions):
\begin{theorem}
Let $k=\mathbb{Q}$ or an imaginary quadratic field and suppose that $\chr \k=0$.  If $X$ is an affine curve over $k$ satisfying \ref{cond1} then $X$ contains an infinite set of $\co_k$-integral points if and only if there exists a non-constant analytic map $f:\k\to X$ if and only if $X$ is rational with a single point at infinity.
\end{theorem}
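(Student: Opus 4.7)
The plan is to prove the three-way equivalence by showing (iii) implies both (i) and (ii) and, conversely, that the failure of (iii) forces the failure of both.

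The easy direction (iii) $\Rightarrow$ (i),(ii): if $X$ is rational with a single point $P$ at infinity, then by \ref{cond1} the point $P$ is $k$-rational, so the normalization of $X$ is $k$-isomorphic to $\mathbb{A}^1_k$. The canonical map $\mathbb{A}^1 \to X$ is finite and birational and thus gives both a non-constant analytic map $\k \to X$ (via the identity $\k \to \mathbb{A}^1(\k)$) and an infinite family of $\co_k$-integral points (the image of $\co_k \subset \mathbb{A}^1(k)$).

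For the converse, suppose $X$ is not rational with a single point at infinity. First reduce to the smooth case by letting $\nu \colon Y \to \tilde X$ be the normalization, which by \ref{cond1} is an isomorphism over $\tilde X \setminus X$; so $Y$ is a smooth projective curve, $X^\nu := \nu^{-1}(X) \subset Y$, and every point of $S := Y \setminus X^\nu$ is $k$-rational. The hypothesis translates to: either $g(Y) \geq 1$, or $g(Y) = 0$ and $|S| \geq 2$. On the analytic side (ii) $\Rightarrow$ (iii), the universal property of normalization lifts any non-constant analytic map $\k \to X$ to a non-constant analytic map $\k \to Y$ avoiding $S$. When $g(Y) \geq 1$ the non-Archimedean Picard theorem for curves rules this out. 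When $g(Y) = 0$ and $|S| \geq 2$ we get an analytic map $\k \to \mathbb{P}^1_\k$ omitting two points, equivalently a nowhere-zero analytic function on $\k$, which must be constant in the non-Archimedean setting. On the arithmetic side (i) $\Rightarrow$ (iii), after removing the finitely many preimages of singular points of $X$, an infinite $\co_k$-integral subset of $X$ lifts to an infinite $\co_k$-integral subset of $X^\nu \subset Y$; if $g(Y) \geq 1$ or $|S| \geq 3$, Siegel's theorem yields a contradiction. In the remaining case $g(Y) = 0$, $|S| = 2$, the $k$-rationality of the two punctures gives $X^\nu \cong \mathbb{G}_{m,k}$ over $k$, so the integral points correspond to elements of $\co_k^\ast$, which is finite for $k = \mathbb{Q}$ or an imaginary quadratic field, again a contradiction.

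The main obstacle is using condition \ref{cond1} at just the right place. Without $k$-rationality of the components at infinity, the case $g(Y) = 0$, $|S| = 2$ could instead give a non-split form of $\mathbb{G}_m$ over $k$ (an affine conic attached to a quadratic extension), which may carry infinitely many $\co_k$-integral points and admit non-constant analytic parametrizations; \ref{cond1} forecloses this and lets the Dirichlet unit theorem for $\co_k$ apply in its sharpest form. A secondary technical point is the clean lifting through the normalization of both integral points and non-Archimedean analytic maps, which is standard but merits explicit attention when $\tilde X$ is allowed to be singular inside $X$.
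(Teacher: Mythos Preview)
Your proof is correct and follows exactly the route the paper sketches in its one-line justification (``This follows easily from Siegel's theorem and Berkovich's non-Archimedean analogue of Picard's theorem''), supplying the normalization reduction and the case analysis that the paper leaves implicit. One minor slip in your closing commentary: a non-split $k$-form of $\mathbb{G}_m$ does \emph{not} admit a non-constant analytic map from $\k$, since over the algebraically closed field $\k$ it splits and $\mathbb{G}_m$ itself admits none---condition \ref{cond1} is genuinely needed only on the arithmetic side.
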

The last equivalence holds in arbitrary characteristic (we only assumed $\chr \k=0$ to ensure that $X$ makes sense over both $k$ and $\k$).  This follows easily from Siegel's theorem and Berkovich's non-Archimedean analogue of Picard's theorem.  As a special case, we have the following version of Berkovich's Picard theorem.
\renewcommand{\thetheorem}{\arabic{section}.\arabic{theorem}A}
\setcounter{theoremb}{\value{theorem}}
\begin{theorema}
\label{Pic}
Any analytic map from $\k$ to a projective curve omitting two points must be constant.
\end{theorema}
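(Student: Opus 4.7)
The plan is to reduce to a smooth projective curve and then split according to the genus.

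First I would reduce to the smooth irreducible case. Since $\k$ is irreducible as an analytic space, the image of a non-constant $f$ lies in a single irreducible component of $C$, so I may assume $C$ is irreducible. Let $\pi\colon \widetilde C \to C$ be the normalization. Near any singular point $Q$ in the image of $f$, the analytic-local picture of $C$ is a union of smooth branches, and since each small disk in $\k$ is irreducible, it must map into a single branch; this gives a unique lift $\widetilde f\colon \k \to \widetilde C$. Because $P_1\neq P_2$, the disjoint nonempty fibers $\pi^{-1}(P_1)$ and $\pi^{-1}(P_2)$ supply at least two omitted points on $\widetilde C$, so I may assume $C$ is smooth and $f\colon \k \to C\setminus\{P_1,P_2\}$.

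If $g(C)=0$, I would choose an isomorphism $C\cong \mathbb{P}^1$ sending $P_1,P_2$ to $0,\infty$, so that $f\colon \k\to \mathbb{G}_m$ is an entire function without zeros. By the non-Archimedean Weierstrass factorization every entire function on $\k$ is of the form $cz^n\prod_i(1-z/\alpha_i)$ over its nonzero zeros $\alpha_i$ (with $|\alpha_i|\to\infty$); with no zeros, both the monomial and the product are trivial and $f$ is constant. (Equivalently, the Newton polygon of a nonvanishing entire function has a single vertex at $(0,-\log|a_0|)$, forcing all higher coefficients to vanish.)

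If $g(C)\geq 1$, I would appeal to the stronger fact that every analytic map from $\k$ to a positive-genus projective curve is constant. Fixing a base point in $C(\k)$ and composing with the Abel--Jacobi embedding $\phi\colon C\hookrightarrow J(C)$, it suffices to show that every analytic map $\k\to J(C)$ is constant. By Raynaud's rigid analytic uniformization, $J(C)^{\mathrm{an}}$ is the quotient of a semiabelian variety $E$ by a discrete lattice; since $\k$ is simply connected in the rigid sense, the map lifts to $E^{\mathrm{an}}$ and then projects to the torus and good-reduction factors of $E$. The torus projection is handled coordinatewise by the genus zero argument, and the good-reduction projection is forced to be constant because Berkovich reduction sends $\k$ to a single closed point of the residue variety. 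Injectivity of $\phi$ on points then yields that $f$ itself is constant.

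The main obstacle is the positive-genus step. The reduction to the smooth case is formal and the genus zero argument is elementary, but ruling out non-constant analytic maps $\k\to A$ for $A$ an abelian variety is the substantive non-Archimedean input and rests on the full strength of rigid analytic uniformization.
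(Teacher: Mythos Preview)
The paper does not supply its own proof of this statement: Theorem~\ref{Pic} is quoted as Berkovich's non-Archimedean Picard theorem and invoked as a known result throughout, so there is no argument in the paper to compare against. Your proposal therefore has to be judged on its own merits.

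The reduction to the smooth case and the genus-zero step are fine, and the overall strategy for genus $\geq 1$ (embed in the Jacobian, uniformize, lift through the lattice) is the standard one. But the positive-genus step, which you yourself flag as the crux, has two real problems as written. First, in the semiabelian extension $0\to T\to E\to B\to 0$ there is generally no projection $E\to T$, so you cannot ``project to the torus and good-reduction factors'' independently; the correct order is to show the composite $\k\to E\to B$ is constant, land in a single fiber (a $T$-torsor $\cong T$), and only then apply the nowhere-vanishing-entire-function argument.

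Second, the assertion that ``Berkovich reduction sends $\k$ to a single closed point of the residue variety'' is not true as stated: the reduction of a closed disk in $\k$ is the affine line over the residue field, not a point. What is actually needed is that the \emph{composed} map from this reduced affine line to the special fiber $\overline{B}$ is constant, and the reason is that an abelian variety over the residue field contains no rational curves. Once you know all values of $f$ reduce to the same closed point of $\overline{B}$, the residue polydisk over that point is analytically an open polydisk, and then bounded entire functions on $\k$ being constant finishes the job. Without inserting this step your good-reduction argument does not go through.
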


This corresponds to the following theorem on integral points.

\begin{theoremb}
\label{Sieg}
Let $k=\mathbb{Q}$ or an imaginary quadratic field and let $C$ be an affine curve over $k$ with at least two $k$-rational points at infinity.  Then any set of $\co_k$-integral points on $C$ is finite.
\end{theoremb}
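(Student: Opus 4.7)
The plan is to combine Siegel's theorem on integral points on affine curves with the finiteness of $\co_k^{*}$ for $k=\mathbb{Q}$ or an imaginary quadratic field.

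First, I would reduce to the smooth case by passing to the normalization. Let $\bar{C}$ be a projective closure of $C$, let $\pi\colon \tilde{C}\to\bar{C}$ be the normalization of $\bar{C}$, and put $\tilde{C}^{\circ}=\pi^{-1}(C)$, a smooth affine curve. Because $\pi$ is finite and birational, it restricts to an isomorphism over the smooth locus of $C$, so $C(k)$ and $\tilde{C}^{\circ}(k)$ differ by at most the finite set of $k$-rational singular points of $C$; hence it suffices to prove $\tilde{C}^{\circ}(\co_k)$ is finite. Moreover, a $k$-rational point at infinity $P\in\bar{C}\setminus C$ has a unique $k$-rational preimage in $\tilde{C}$ if $P$ is smooth in $\bar{C}$, and contributes at least two geometric points to $\tilde{C}\setminus\tilde{C}^{\circ}$ otherwise.

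Next, I would perform a case analysis according to the genus $g$ of $\tilde{C}$ and $s=|\tilde{C}\setminus\tilde{C}^{\circ}|$ (counted geometrically). The hypothesis together with the preimage analysis forces $s\ge 2$ in general, and $s\ge 3$ as soon as any $k$-rational point at infinity of $C$ is singular in $\bar{C}$. Siegel's theorem over the number field $k$ then handles every case except $g=0$, $s=2$: it yields finiteness of $\tilde{C}^{\circ}(\co_k)$ whenever $g\ge 1$, or $g=0$ with $s\ge 3$.

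In the remaining case $g=0$, $s=2$, the analysis above compels both $k$-rational points at infinity of $C$ to be smooth in $\bar{C}$, and their $k$-rational preimages exhaust $\tilde{C}\setminus\tilde{C}^{\circ}$. Hence $\tilde{C}\cong\mathbb{P}^1_k$ and $\tilde{C}^{\circ}\cong\mathbb{G}_{m,k}$, so integral points on $\tilde{C}^{\circ}$ correspond bijectively to the elements of $\co_k^{*}$. Since $\co_k^{*}$ is finite precisely when $k=\mathbb{Q}$ or imaginary quadratic, the proof concludes. I expect the only real subtlety to be the bookkeeping in the first step, tracking how singularities of $\bar{C}$ at the $k$-rational points at infinity interact with the normalization; beyond that, the argument is a direct appeal to classical results, with the finiteness of units used essentially only in the $\mathbb{G}_m$ case.
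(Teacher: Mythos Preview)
The paper does not supply a proof of this statement; it is presented as a classical fact that ``follows easily from Siegel's theorem,'' parallel to Berkovich's non-Archimedean Picard theorem. Your outline---reduce to the smooth case by normalization, invoke Siegel for $g\ge 1$ or $s\ge 3$, and handle the residual $\mathbb{G}_m$ case via finiteness of $\co_k^*$---is exactly the standard argument the paper has in mind.

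One correction to your bookkeeping: it is not true that a singular $k$-rational point of $\bar{C}$ always contributes at least two geometric points to the normalization. A cusp, for instance, has a single (necessarily $k$-rational) preimage. Thus your claim that $s\ge 3$ whenever some $k$-rational point at infinity is singular is false, as is the consequence that in the $g=0$, $s=2$ case both points at infinity must be smooth. Fortunately this does not damage the argument: when $s=2$, the two $k$-rational points at infinity of $C$ must each have exactly one preimage in $\tilde{C}$ (smooth or cuspidal), there can be no further points at infinity, and a $k$-rational point with a unique geometric preimage under normalization has that preimage defined over $k$ by Galois descent. So your conclusion that $\tilde{C}^\circ\cong\mathbb{G}_{m,k}$ survives; only the intermediate justification needs adjusting.
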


A generalization of Theorem \ref{Pic} to higher dimensions was obtained by Lin and Wang \cite{LW}, and a refinement was given by An, Levin and Wang in  \cite{ALW} as follows.
 \begin{theorem} 
\label{ALW}
Let $X$ be a nonsingular projective variety over $\k$. Let $D_1,\ldots,D_m$ be
 effective divisors on $X$ with empty intersection.  Let $D=\sum_{i=1}^mD_i$.  
\begin{enumerate}[label={\rm \theenumi}]  
\item  If $\kappa(D_i)>0$ for all $i$, then the image of an analytic map 
$f:\k\to X\setminus D$ is contained in a proper subvariety of $X$.
\item  If $D_i$ is big for all $i$, then there exists a proper Zariski-closed subset $Z\subset X$ such that the image of any non-constant analytic map 
$f:\k\to X\setminus D$ is contained in $Z$.
\item  If $D_i$ is ample for all $i$, then there is no non-constant analytic map from $\k$ to $X\setminus D$.
\end{enumerate}
\end{theorem}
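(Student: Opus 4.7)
The proof I would attempt runs by induction on $n=\dim X$, proving (a), (b), (c) in cascade at each inductive step. The base case $n=1$ follows directly from Theorem~\ref{Pic}: each of the three hypotheses forces $D_i$ to have nonempty support, and $\bigcap_iD_i=\emptyset$ then puts at least two points in $\mathrm{supp}(D)$, so any analytic $f\colon\k\to X\setminus D$ is constant. The key technical tool used throughout is that a line bundle on $\k$ is trivial, so for any section $\sigma\in H^0(X,mD_i)$ the pullback $f^*\sigma$ is represented by an entire function on $\k$; when $\sigma$ is the canonical section with $\dv(\sigma)=mD_i$ and $f$ avoids $D_i$, that entire function is nowhere vanishing, hence constant (applying Theorem~\ref{Pic} to $\PP^1\setminus\{0,\infty\}$).

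For the inductive step of (a), fix an index $i$ with $\kappa(D_i)>0$ and linearly independent sections $s_0,s_1\in H^0(X,mD_i)$ with $s_0$ the canonical section. The rational function $\varphi=s_1/s_0$ on $X$ is non-constant and has poles supported in $D_i$, so the composition $\varphi\circ f\colon\k\to\PP^1$ misses $\infty$ and is represented by an entire function $g\colon\k\to\k$. My plan is to arrange, using both the hypothesis $\bigcap_jD_j=\emptyset$ and $\kappa(D_j)>0$ for every $j\neq i$, a choice of sections in the pencil $\{s_1-cs_0\}_{c\in\k}$ for which $g$ is additionally forced to miss some value $c\in\k$; Theorem~\ref{Pic} then yields $g\equiv c$, placing $f(\k)$ in the proper subvariety $\{\varphi=c\}\subsetneq X$.

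Part (b) follows by running the same pencil construction uniformly: the bigness of each $D_i$ supplies enough sections of $H^0(X,mD_i)$ that the procedure produces only finitely many proper subvarieties $\{\varphi_i=c\}\subsetneq X$, independently of $f$, and their union gives the required $Z$. Part (c) then follows by iterating: since ample implies big, (b) yields $f(\k)\subseteq Z\subsetneq X$, and passing to a desingularization $\tilde Y$ of an irreducible component $Y$ of $Z$ containing $f(\k)$ reduces to a variety of strictly smaller dimension. On $Y$, no $D_i$ can contain $Y$ (else $f(\k)$ would meet $D_i$), so the restrictions $D_i|_{\tilde Y}$ are nontrivial ample divisors with empty intersection, and the inductive hypothesis for (c) in lower dimension concludes that $f$ is constant.

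The main obstacle is the delicate pencil step in (a): producing a value $c\in\k$ and sections $s_0,s_1$ so that $f(\k)\cap\dv(s_1-cs_0)=\emptyset$, equivalently so that the entire function $g$ omits $c$. Generically, a divisor in the pencil $|mD_i|$ meets any given curve, so this is not automatic; it is precisely the conjunction of $\bigcap_jD_j=\emptyset$ with positive Iitaka dimension of every $D_j$ that supplies enough sections in the various linear systems $|mD_j|$ to arrange this incidence. Handling this transversality-type condition is the core content of the inductive step, and the three statements (a), (b), (c) correspond to progressively stronger settings in which this arrangement becomes easier to achieve.
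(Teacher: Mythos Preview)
This theorem is not proved in the present paper; it is quoted from \cite{LW} and \cite{ALW} as a known result and then invoked as a black box in the proofs of Theorems~\ref{pxn} and~\ref{rationalsurface}. There is consequently no argument here against which to compare your attempt.

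That said, your sketch leaves open precisely the step you yourself flag as the main obstacle. For part~(a) you need a value $c\in\k$ such that $f(\k)$ misses the member $\{s_1=cs_0\}$ of the pencil in $|mD_i|$, but a general member of a positive-dimensional linear system meets any given curve, and you give no mechanism by which the hypotheses $\bigcap_jD_j=\emptyset$ and $\kappa(D_j)>0$ for the remaining indices $j$ single out such a $c$. Asserting that these hypotheses ``supply enough sections \ldots\ to arrange this incidence'' restates the goal rather than achieving it; in particular, working inside $|mD_i|$ for a single fixed $i$ one would need an effective divisor linearly equivalent to $mD_i$ whose support lies in $\bigcup_jD_j$, and the hypotheses do not obviously furnish one. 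Your inductive step for~(c) also slips: the pullback of an ample divisor along a desingularization $\tilde Y\to Y\hookrightarrow X$ is in general only nef and big, not ample, so the inductive hypothesis in the form you state does not apply on $\tilde Y$.
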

The arithmetic analogue of Theorem \ref{ALW} is implicit in the proof of a more general result proved in \cite{Run} using a higher-dimensional version of ``Runge's method".
\begin{theoremb}[Levin]
\label{Levin}
Let $k=\mathbb{Q}$ or an imaginary quadratic field.  Let $X$ be a nonsingular projective variety over $k$.  Let $D_1,\ldots,D_m$ be
 effective divisors on $X$, defined over $k$, with empty intersection.  Let $D=\sum_{i=1}^mD_i$.
\begin{enumerate}[label={\rm \theenumi}] 
\item   If $\kappa(D_i)>0$ for all $i$, then any set $R$ of $\co_{k}$-integral points on $X\setminus D$ is contained in a proper Zariski-closed subset of $X$.
\item If $D_i$ is big for all $i$, then there exists a proper Zariski-closed subset $Z\subset X$ such that for any set $R$ of $\co_{k}$-integral points on $X\setminus D$, the set $R\setminus Z$ is finite.
\item  If $D_i$ is ample for all $i$, then all sets $R$ of $\co_{k}$-integral points on $X\setminus D$ are finite.\label{Levinc}
\end{enumerate}
\end{theoremb}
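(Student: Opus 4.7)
The plan is to adapt the higher-dimensional form of Runge's method from \cite{Run}. The arithmetic input is that $\co_k$ has exactly one archimedean place $v_\infty$, so in Runge's framework the ``bad'' set $S$ has $|S|=1$, strictly less than the number $m\ge 2$ of components (the empty-intersection hypothesis makes $m=1$ either vacuous or incompatible with the positivity hypotheses on $D_1$). The proof then combines a pigeonhole argument at $v_\infty$ with the standard height-theoretic consequences of the positivity assumed on each $D_i$.

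\textbf{Pigeonhole and global height bound.} Fix local Weil functions $\lambda_{D_i,v}$ coming from sections $s_{D_i}$ of $\Ocal_X(D_i)$ equipped with smooth $v_\infty$-adic metrics. The continuous function $P\mapsto \max_i \|s_{D_i}(P)\|_{v_\infty}$ is strictly positive on the compact space $X(k_{v_\infty})$ (this is where $\bigcap_i D_i=\emptyset$ enters), hence bounded below; equivalently, there is a constant $C$ with
\[\min_{1\le i\le m}\lambda_{D_i}(P,v_\infty)\le C\qquad\text{for every }P\in X(k).\]
Assigning to each $P\in R$ an index $i(P)$ realizing this minimum partitions $R=R_1\sqcup\cdots\sqcup R_m$, and it suffices to prove each conclusion with $R_i$ in place of $R$. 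For $P\in R_i$, the archimedean bound $\lambda_{D_i}(P,v_\infty)\le C$ combines with $\co_k$-integrality of $P$ at every finite place (which bounds $\sum_{v\nmid\infty}\lambda_{D}(P,v)$, hence $\sum_{v\nmid\infty}\lambda_{D_i}(P,v)$, uniformly in $P$) to give a uniform global bound $h_{D_i}(P)\le C'$.

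\textbf{Geometric conclusion.} Each part now follows from a standard consequence of bounded $h_{D_i}$. For (c), if $D_i$ is ample then Northcott's theorem gives $|R_i|<\infty$, so $R=\bigsqcup R_i$ is finite. For (b), if $D_i$ is big then $nD_i\sim A+E$ with $A$ ample and $E$ effective; setting $Z_i:=\mathrm{Supp}(E)$, a bound on $h_{D_i}$ forces one on $h_A$ off $Z_i$, so $R_i\setminus Z_i$ is finite by Northcott and $Z:=\bigcup_i Z_i$ (independent of $R$) works. For (a), if $\kappa(D_i)>0$ then for $n\gg 0$ the linear system $|nD_i|$ defines a rational map $\phi_i\colon X\dashrightarrow Y_i$ with $\dim Y_i\ge 1$; a bound on $h_{D_i}$ forces a bound on the Weil height of $\phi_i(P)$ with respect to a very ample class on $Y_i$, so Northcott on $Y_i$ confines $R_i$ to the union of the base locus of $|nD_i|$ with $\phi_i^{-1}$ of a finite subset of $Y_i$, a proper Zariski-closed subset of $X$.

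\textbf{Main obstacle.} The decisive ingredient is the compactness-plus-pigeonhole bound $\min_i\lambda_{D_i}(P,v_\infty)\le C$: it is the one place where both the empty-intersection of the $D_i$ and the uniqueness of the archimedean place of $\co_k$ are essential, and it is what reduces the problem of integral points on $X\setminus D$ to a bounded-$h_{D_i}$ problem one $i$ at a time. After this reduction, each part of the theorem is a clean corollary of Northcott's theorem specialized to the type of positivity imposed on $D_i$.
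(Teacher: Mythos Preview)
Your argument is correct and is precisely the higher-dimensional Runge's method that the paper has in mind: note that the paper does not give its own proof of this theorem but cites it from \cite{Run}, remarking that it is ``implicit in the proof of a more general result proved in \cite{Run} using a higher-dimensional version of `Runge's method'\,''. Your write-up reconstructs that argument faithfully---the compactness/pigeonhole bound $\min_i \lambda_{D_i}(P,v_\infty)\le C$ (using $\bigcap_i D_i=\emptyset$ and the single archimedean place of $\co_k$), the resulting uniform bound on $h_{D_i}$ along each piece $R_i$, and the case split via Northcott according to whether $D_i$ is ample, big, or merely has $\kappa(D_i)>0$---so there is nothing to correct.
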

Furthermore, as shown in \cite{Run}, in each of the above cases, the integral points can be effectively computed.

We note that if 
 $D_1,\ldots,D_m$ are effective divisors in general position on $X$, then they have empty intersection if $m\ge \dim X+1$.
 Here, a collection of effective divisors $D_i$ on a projective variety $X$ of dimension $n$ is said to be {\it in general position} if for each 
 $1\le k\le n+1$ and each choice of indices $i_1<\ldots<i_k$, each irreducible component of $D_{i_1}\cap\ldots\cap D_{i_k}$ has codimension at least $k$ in $X$;  in particular, this intersection is empty when $k=n+1$.
 
In view of Theorems \ref{ALW} and \ref{Levin}, a next natural case to study is varieties of the form $X\setminus \cup_{i=1}^nD_i$, where $n=\dim X$.  The non-Archimedean case was studied for projective space by An, Wang, and Wong \cite{AWW} (cf.\ \cite{ALW} for a necessary and sufficient statement), and the arithmetic analogue was established in \cite{ALW}.

\begin{theorem}[An, Wang, Wong]
\label{AWW2}
Let $D_1,\ldots,D_n$ be nonsingular hypersurfaces in $\mathbb{P}^n$ intersecting transversally.
Then there is no non-constant analytic map from $\k$ to $\mathbb{P}^n\setminus \cup_{i=1}^nD_i$ if $\deg D_i\geq 2$ for each $1\leq i \leq n$.
\end{theorem}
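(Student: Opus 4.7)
The plan is to lift $f$ to $\mathbb{A}^{n+1}$, use the rigidity of non-Archimedean entire nowhere-vanishing functions to trap $f$ inside an algebraic curve, and then apply Theorem~\ref{Pic} to that curve.

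First I would pick a lift $\tilde f = (f_0, \ldots, f_n) \colon \k \to \mathbb{A}^{n+1} \setminus \{0\}$ whose coordinates are entire with no common zero, and a homogeneous defining polynomial $P_i$ of degree $d_i$ for each $D_i$. Since $f$ avoids $D_i$, the function $P_i(\tilde f)$ is entire and nowhere zero on $\k$, hence a nonzero constant $c_i \in \k^*$ by the basic non-Archimedean fact that entire functions without zeros are constant. Thus $\tilde f$ takes values in the affine variety $V = \{x \in \mathbb{A}^{n+1} : P_i(x) = c_i,\ 1 \le i \le n\}$, and if $f$ is non-constant then the Zariski closure $V'$ of $\tilde f(\k)$ is an irreducible one-dimensional subvariety of $V$.

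Next I would analyze the projective closure $\bar V \subset \mathbb{P}^{n+1}$ with homogenizing coordinate $x_{n+1}$, cut out by $P_i(x_0, \ldots, x_n) - c_i x_{n+1}^{d_i} = 0$; let $\bar V'$ be the irreducible component of $\bar V$ containing $V'$. The points of $\bar V$ at infinity, $\bar V \cap H_\infty$ with $H_\infty = \{x_{n+1} = 0\}$, are precisely $D_1 \cap \cdots \cap D_n \subset \mathbb{P}^n$, a set of $\prod d_i \ge 2^n$ distinct points by transversality. A Jacobian computation, crucially using that $\partial_{n+1}(c_i x_{n+1}^{d_i})$ vanishes at $x_{n+1} = 0$ because $d_i \ge 2$, shows that $\bar V$ is smooth at each such point, that it meets $H_\infty$ transversally there, and that the unique tangent direction to $\bar V$ at each such point is along the $x_{n+1}$-axis. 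The heart of the argument is the claim that $\bar V$ contains no line of $\mathbb{P}^{n+1}$: any such line would meet $H_\infty$ in a single point $p \in D_1 \cap \cdots \cap D_n$ and would have to coincide with the tangent line at $p$, namely the $x_{n+1}$-axis through $p$; but on that axis $P_i(p) = 0 \ne c_i t^{d_i}$ for $t \ne 0$, so only $p$ itself lies in $\bar V$. Hence $\bar V'$ has degree at least $2$ in $\mathbb{P}^{n+1}$, and B\'ezout together with the transversality at smooth points gives $|\bar V' \cap H_\infty| \ge 2$.

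Finally I would lift $\tilde f$ through the normalization $\nu \colon \tilde V' \to \bar V'$, which is an isomorphism at smooth points of $\bar V'$ and in particular at $\bar V' \cap H_\infty$, to obtain a non-constant analytic map $\hat f \colon \k \to \tilde V'$ omitting at least two points. Theorem~\ref{Pic} then forces $\hat f$, and hence $f$, to be constant, contradicting the non-constancy assumption. The main obstacle is the no-lines step: both the transversality of the $D_i$ and the hypothesis $d_i \ge 2$ enter essentially there, and without either one a line component of $\bar V$ could appear, yielding an affine line $\mathbb{A}^1$ on which the Picard obstruction disappears.
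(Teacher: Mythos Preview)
The paper does not give its own proof of Theorem~\ref{AWW2}; it is stated in the introduction as a result quoted from \cite{AWW}, so there is no in-paper proof to compare against.

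Your argument is correct in outline and is in fact the approach of the original paper \cite{AWW}: pull back to the affine cone, use the non-Archimedean Liouville fact to land in the affine curve $\{P_i=c_i\}$, and then invoke the one-variable Picard theorem on its normalization. Two points are glossed over and deserve a sentence each. First, the claim that the Zariski closure $V'$ of $\tilde f(\k)$ is an irreducible \emph{curve}: irreducibility holds because non-Archimedean entire functions form an integral domain, and $\dim V'\le 1$ because every component of $\bar V$ has dimension at most $1$ (a component of dimension $\ge 2$ would meet $H_\infty$ in something of dimension $\ge 1$, but $\bar V\cap H_\infty=\bigcap_i D_i$ is finite by transversality). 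Second, lifting $\tilde f$ through the normalization $\nu$ needs a removable-singularity argument across the discrete preimage of the singular locus of $\bar V'$; this is standard since $\tilde V'$ is projective and bounded analytic functions on a non-Archimedean punctured disc extend. With these two remarks your proof is complete, and your identification of the ``no lines'' step as the place where both the transversality and the hypothesis $d_i\ge 2$ are essential is exactly right.
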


\begin{theoremb}[An, Levin, Wang]
Let $k=\mathbb{Q}$ or an imaginary quadratic field.  Let $D_1,\ldots, D_n$ be nonsingular hypersurfaces defined over $k$ in $\mathbb{P}^n$ intersecting transversally.  Suppose that every point in the intersection $\cap_{i=1}^nD_i$ is $k$-rational.  Then any set of $\co_k$-integral points on $\mathbb{P}^n \setminus \cup_{i=1}^nD_i$ is finite.
\end{theoremb}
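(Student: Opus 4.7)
The approach is to mirror the proof of the non-Archimedean Theorem \ref{AWW2} on the arithmetic side, using the dictionary of \cite{ALW} between non-Archimedean analytic curves and $\co_k$-integral points. The crucial geometric move is the same in both settings: a blow-up at the points of $\cap_{i=1}^n D_i$. By Bezout's theorem and transversality, this intersection consists of exactly $\prod_{i=1}^n \deg D_i$ points, all $k$-rational by hypothesis. Let $\pi : \tilde{X} \to \PP^n$ be the blow-up at this set, with exceptional divisors $E_j$ over the points $P_j \in \cap_i D_i$ and strict transforms $\tilde{D}_i$ of the $D_i$. Since the centers are $k$-rational, $\tilde{X}$ and each $\tilde{D}_i, E_j$ are defined over $k$. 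Transversality of the $D_i$ at each $P_j$ implies that $\tilde{D}_i|_{E_j}$ gives the $n$ coordinate hyperplanes of $E_j \cong \PP^{n-1}$, so $\cap_{i=1}^n \tilde{D}_i \cap E_j = \emptyset$, and hence $\cap_{i=1}^n \tilde{D}_i = \emptyset$ on all of $\tilde{X}$. Because $\pi$ is an isomorphism over $\PP^n \setminus \cap_i D_i$, which contains $\PP^n \setminus \cup_i D_i$, the $\co_k$-integral points on $\PP^n \setminus \cup_i D_i$ correspond bijectively to $\co_k$-integral points on $\tilde{X} \setminus (\cup_i \tilde{D}_i \cup \bigcup_j E_j)$.

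With this set-up I would attempt to apply a higher-dimensional Runge-type finiteness theorem to the boundary divisor $\sum_i \tilde{D}_i + \sum_j E_j$ on $\tilde{X}$, whose $n + m$ components (where $m = \prod_i \deg D_i$) have empty common intersection and are all defined over $k$. The first natural candidate is Theorem \ref{Levin}\ref{Levinc} applied on $\tilde{X}$; more precisely, one would invoke the underlying method of proof, namely the higher-dimensional Runge method of \cite{Run}. Since $|S| = 1$ for $k = \QQ$ or imaginary quadratic, Runge's method is at its most effective, and the combination of empty common intersection, $k$-rationality of the components, and the finite unit group $\co_k^\times$ should be enough to force finiteness.

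The main obstacle is that the ampleness hypothesis of Theorem \ref{Levin}\ref{Levinc} fails on $\tilde{X}$: the exceptional divisors $E_j$ are never ample, and the strict transforms $\tilde{D}_i$ themselves can fail to be ample (for example, when $\deg D_i = 2$, a line $L$ through two intersection points $P_j, P_{j'}$ gives $\tilde{D}_i \cdot \tilde{L} = \deg D_i - 2 = 0$). The hard step will be to adapt Levin's Runge argument so that the missing ampleness is compensated by the local geometric data the $E_j$ encode: by transversality, in a formal neighborhood of each $P_j$ the open variety $\PP^n \setminus \cup_i D_i$ is isomorphic to the torus $\mathbb{G}_m^n$, whose $\co_k$-integral points form the finite set $(\co_k^\times)^n$. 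Formalizing this local-to-global passage---so that the local finiteness in each torus chart around a $P_j$, together with the higher-dimensional Runge argument across the exceptional divisors separating these charts, assembles into a global finiteness statement on $\tilde{X}$---is the technical heart of the proof.
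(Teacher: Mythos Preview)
This theorem is only cited in the present paper (it is attributed to \cite{ALW}); the paper does not supply its own proof.  So there is no in-paper argument to compare against.  That said, your proposal has a genuine gap and also diverges from the method the paper uses for the closely related Theorems~\ref{pxn} and~\ref{pxnA}.

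The gap is exactly the one you flag yourself: after blowing up you pass to the divisor $\sum_i \tilde D_i+\sum_j E_j$ and observe that none of the parts of Theorem~\ref{Levin} apply, because the $E_j$ are neither ample nor big (indeed $\kappa(E_j)=0$), and the $\tilde D_i$ need not be ample.  Your suggested fix---a local torus computation near each $P_j$ glued via a Runge argument---is a sketch of an intention, not a proof; nothing in what you wrote explains how finiteness of $(\co_k^\times)^n$ in formal neighborhoods of the $P_j$ controls integral points that are far from every $P_j$.  The difficulty is real: the Runge machinery needs positivity of the boundary components to produce the height inequalities that drive the argument, and the $E_j$ carry no positivity whatsoever.

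The device the paper actually uses (see the proof of Theorem~\ref{pxnA}) avoids this by working not with the strict transforms $\tilde D_i$ and the bare exceptionals $E_j$, but with the divisors $G_{ij}:=\pi^*D_i-E_j=\tilde D_i+\sum_{l\neq j}E_l$.  These still have empty common intersection and are defined over $k$, but now positivity is available: Theorem~\ref{bigness}\ref{biga} shows $G_{ij}$ is big whenever $D_i^n>1$, i.e.\ whenever $\deg D_i\geq 2$.  This is the ingredient you are missing.  Note, however, that even via Theorem~\ref{Levin}(b) this route yields only degeneracy, not finiteness; the full finiteness statement requires further work specific to $\mathbb{P}^n$ carried out in \cite{ALW}, and in particular uses the hypothesis $\deg D_i\geq 2$ (compare Theorem~\ref{AWW2}).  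Without that degree hypothesis the conclusion is false already for $n$ coordinate hyperplanes, whose complement is $\mathbb{G}_m^{n-1}\times\mathbb{A}^1$ with infinitely many $\co_k$-integral points.
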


More generally, when $k=\mathbb{Q}$ or an imaginary quadratic field and $D_1$ and $D_2$ are curves over $k$ intersecting transversally,  a complete characterization  of finiteness of $\co_k$-integral points on $\mathbb{P}^2 \setminus \{D_1 \cup D_2\}$ was given in \cite{ALW}.

We now state our main results.  Under various geometric conditions, we prove the degeneracy of analytic maps $f:\k\to X\setminus \cup_{i=1}^nD_i$.  We also prove analogous results for $\co_k$-integral points on $X\setminus \cup_{i=1}^nD_i$ under the condition \ref{cond1} and the condition
\begin{enumerate}[label= {($\ast$\!$\ast$)}]
\item\label{cond2}Every point in the intersection $\cap_{i=1}^nD_i$ is $k$-rational.
\end{enumerate}
 Notation and definitions will be given in the next section.
\renewcommand{\thetheorem}{\arabic{section}.\arabic{theorem}A}
\setcounter{theoremb}{\value{theorem}} 
\begin{theorema}
\label{pxn}
Let $D_1,\ldots, D_n$ be effective nef divisors intersecting transversally in an $n$-dimensional nonsingular projective variety  $X$ over $\k$.   Let $K_X$ denote the canonical divisor on $X$.
\begin{enumerate}[label={\rm \theenumi}] 
\item  Assume that  either $D_i^n>1$ or  that $D_i^n=1$ and  $K_X.D_i^{n-1}<1-n$ for each $1\le i\le n$.  Then the image of an analytic map 
$f:\k\to X\setminus \cup_{i=1}^nD_i$ is contained in a proper subvariety of $X$. 
\item  If $D_i^n>1$ for all $i$, then there exists a proper Zariski-closed subset $Z\subset X$ such that the image of any non-constant analytic map $f:\k\to X\setminus  \cup_{i=1}^nD_i$ is contained in $Z$.
\end{enumerate}
\end{theorema}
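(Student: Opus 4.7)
My plan is to reduce both (a) and (b) to Theorem \ref{ALW}, which handles collections of big effective divisors with empty common intersection. The transversality hypothesis and the dimension count $\dim X = n$ ensure that $S := \bigcap_{i=1}^n D_i$ is a reduced zero-dimensional set, that is, a finite (possibly empty) collection of points. When $S$ is empty, the nefness of each $D_i$ together with $D_i^n \geq 1$ forces each $D_i$ to be big, so Theorem \ref{ALW}(b) applies directly and yields both (a) and (b). The substance of the proof is therefore to handle the case $S \neq \emptyset$, and the strategy is to augment $\{D_1, \ldots, D_n\}$ with auxiliary effective divisors linearly equivalent to multiples of the $D_i$ so that the enlarged collection has empty common intersection.

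The auxiliary divisors will be produced via asymptotic Riemann-Roch. For $D$ big and nef on $X$, one has
$$h^0(X, mD) = \frac{D^n}{n!} m^n - \frac{K_X \cdot D^{n-1}}{2(n-1)!} m^{n-1} + O(m^{n-2}),$$
which controls the size of the linear system $|mD|$. The numerical conditions in the theorem are calibrated so that, for $m$ sufficiently large, this dimension is big enough to impose base-point-avoidance conditions at each point of $S$; in the principal case $D_i^n > 1$ the leading term suffices, while in the boundary case $D_i^n = 1$ with $K_X \cdot D_i^{n-1} < 1-n$ the subleading term takes over, the $K_X$-condition being the exact threshold that makes the counting go through. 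For each $P \in S$, I then pick $E_P \in |m\, D_{i(P)}|$ missing $P$, and by combining these choices the enlarged collection $\{D_1, \ldots, D_n\} \cup \{E_P : P \in S\}$ consists of big divisors and has empty common intersection.

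The central obstacle is that Theorem \ref{ALW}(b) requires the analytic map to avoid the full union of the divisor collection, whereas $f$ only avoids $\bigcup_i D_i$ and not the auxiliary $E_P$'s. I plan to handle this by exploiting the rigidity of non-Archimedean analytic functions: for $E \sim m D_i$ with canonical sections $s_E$ and $s_{D_i}$, the rational function $s_E / s_{D_i}^m$ is regular away from $D_i$, so $f$ avoiding $D_i$ forces $s_E/s_{D_i}^m \circ f$ to be a well-defined non-Archimedean entire function on $\k$ whose characteristic is $m\, T_f(D_i, r) + O(1) = O(1)$. The non-Archimedean Jensen formula then forces $f^{-1}(E)$ to be a finite set, so $f$ meets each $E_P$ at most finitely often. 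Since those finitely many exceptional images already lie inside the fixed proper closed subvariety $\bigcup_P E_P$, combining with Theorem \ref{ALW}(b) applied to the enlarged collection yields that $f(\k)$ lies in a proper Zariski-closed subset of $X$, proving both (a) and (b). The most delicate part of this plan will be the boundary case $D_i^n = 1$, $K_X \cdot D_i^{n-1} < 1 - n$, which will require pushing the Riemann-Roch estimate to its subleading term and carefully tracking the $O(m^{n-2})$ error.
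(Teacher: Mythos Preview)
Your proposal has a genuine gap at the step where you assert $T_f(D_i,r)=O(1)$. Avoiding a divisor does not bound the characteristic function: already for $X=\mathbb{P}^1$, $D_i=\{\infty\}$, $f(z)=z$, one has $f$ omitting $D_i$ yet $T_f(D_i,r)=\log^+ r\to\infty$. Thus the entire function $g=(s_E/s_{D_i}^m)\circ f$ need not have bounded characteristic, and $f$ may meet each $E_P$ infinitely often. The non-Archimedean rigidity you invoke says only that an entire function \emph{without zeros} is constant; it gives no control on the zero set of a general entire function. Even if $f^{-1}(E_P)$ were finite, a second obstacle remains: Theorem~\ref{ALW} applies only to maps omitting the full collection, and restricting $f$ to $\k$ minus a finite set does not help, since the Picard-type results fail there. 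A smaller issue: your Riemann--Roch count with the subleading term is not the right tool for producing $E_P\in|mD_{i(P)}|$ missing $P$, which is a single linear condition unrelated to the growth of $h^0(mD_i)$; a priori $P$ could lie in the stable base locus of $D_i$ for every $i$.

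The paper's route avoids all of this by passing to the blow-up $\pi:\tilde X\to X$ along the points of $S=\bigcap_i D_i$. The lifted map $\tilde f$ \emph{automatically} omits each $G_{ij}:=\pi^*D_i-E_j=\tilde D_i+\sum_{k\neq j}E_k$, since $\bigcup_{i,j}G_{ij}=\pi^{-1}(\bigcup_i D_i)$, and transversality gives $\bigcap_{i,j}G_{ij}=\bigcap_i\tilde D_i=\emptyset$. The asymptotic Riemann--Roch estimate you wrote down is then used (Theorem~\ref{bigness}) not to find sections avoiding $P$, but to show $h^0(m\pi^*D_i-mE_j)$ grows like $m^n$ (resp.\ $m^{n-1}$) under the hypothesis $D_i^n>1$ (resp.\ $D_i^n=1$, $K_X.D_i^{n-1}<1-n$), so that each $G_{ij}$ is big (resp.\ has $\kappa\geq n-1>0$). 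Theorem~\ref{ALW} then applies directly on $\tilde X$.
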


\begin{theoremb} 
\label{pxnA}
Let $k=\QQ$ or an imaginary quadratic field.  Let $X$ be an $n$-dimensional nonsingular projective variety over $k$.  Let $D_1,\ldots, D_n$ be effective nef divisors on $X$, all defined over $k$,  intersecting transversally in $X$.  Let $K_X$ denote the canonical divisor on $X$.  Suppose that every point in the intersection $\cap_{i=1}^nD_i$ is $k$-rational.

\begin{enumerate}[label={\rm \theenumi}] 
\item   Assume that  either $D_i^n>1$ or  that $D_i^n=1$ and  $K_X.D_i^{n-1}<1-n$ for each $1\le i\le n$.  Then any set $R$ of $\Ocal_k$-integral points on $X\setminus \cup_{i=1}^nD_i$ is contained in a proper Zariski-closed subset of $X$. 
\item  If $D_i^n>1$ for all $i$, then there exists a proper Zariski-closed subset $Z\subset X$ such that for any set $R$ of $\Ocal_k$-integral points on $X\setminus \cup_{i=1}^nD_i$, the set $R\setminus Z$ is finite.
\end{enumerate}
\end{theoremb}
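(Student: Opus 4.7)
The plan is to parallel the proof of Theorem~\ref{pxn} using the dictionary of \cite{ALW}: wherever the non-Archimedean argument invokes Theorem~\ref{ALW}, the arithmetic argument will invoke Theorem~\ref{Levin}. The strategy is to blow up at the intersection points of the $D_i$, verify the positivity hypotheses of Theorem~\ref{Levin} on the blowup using the numerical hypotheses, and pull the conclusion back to $X$.

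By transversality of the $D_i$ together with \ref{cond2}, the intersection $\bigcap_{i=1}^n D_i = \{P_1,\ldots,P_r\}$ is a finite set of $k$-rational points, where $r = D_1\cdots D_n$. Let $\pi: X'\to X$ be the blowup at $\{P_1,\ldots,P_r\}$, with exceptional divisor $E_j$ over $P_j$, and put $\tilde D_i = \pi^* D_i - \sum_j E_j$ (the strict transform: each $D_i$ passes through each $P_j$ smoothly with multiplicity one by transversality). A direct check gives $\tilde D_1\cap\cdots\cap\tilde D_n = \emptyset$ on $X'$: off $\bigcup_j E_j$ the intersection projects to $\bigcap_i D_i\setminus\{P_1,\ldots,P_r\}=\emptyset$, and on each $E_j\cong\PP^{n-1}$ the divisors $\tilde D_i\cap E_j$ are $n$ hyperplanes in general position, whose intersection is empty. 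Since $\pi$ is birational and an isomorphism away from $\bigcup_j E_j$, any set of $\Ocal_k$-integral points on $X\setminus\bigcup_i D_i$ lifts to a set of $\Ocal_k$-integral points on $X'\setminus\bigcup_i\tilde D_i$.

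The crux is to verify the positivity on $X'$ required by Theorem~\ref{Levin}: that $\kappa(\tilde D_i) > 0$ (case (a)) or that $\tilde D_i$ is big (case (b)) for every $i$. Using the blowup formulas
\[
\tilde D_i^n = D_i^n - r,\qquad K_{X'}\cdot\tilde D_i^{n-1} = K_X\cdot D_i^{n-1} + r(n-1),
\]
one analyzes $h^0(X', m\tilde D_i)$ asymptotically in $m$ (equivalently, by counting sections of $mD_i$ on $X$ vanishing to order $\geq m$ at each $P_j$). The leading coefficient in $m$ is $(D_i^n - r)/n!$; in the boundary case where this vanishes, the sub-leading term involves $K_X\cdot D_i^{n-1}$. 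The numerical hypotheses in (a) and (b) are tailored so that the dominant coefficient is positive. Once this is established, Theorem~\ref{Levin}(a) (resp.\ (b)) applied to $(X', \tilde D_1,\ldots,\tilde D_n)$ furnishes a proper Zariski-closed subset $Z'\subset X'$, whose image $Z=\pi(Z')\subset X$ is a proper Zariski-closed subset with the required property.

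The main obstacle is the positivity verification when $D_i^n$ is not comfortably larger than $r$: a single simultaneous blowup at all the $P_j$ does not immediately exhibit $\tilde D_i$ as big, since $\tilde D_i^n = D_i^n - r$ may be zero or negative. Overcoming this will likely require iterating the blowup one point at a time and tracking the numerical invariants inductively, or combining $\tilde D_i$ with suitable multiples of the $E_j$ to construct big divisors whose common zero locus is still $\pi^{-1}(\bigcup_i D_i)$, so that Theorem~\ref{Levin} applies to the modified collection and the precise hypotheses in (a) and (b) can be exploited.
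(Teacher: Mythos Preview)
Your overall strategy---blow up at the intersection points, lift the integral points, and invoke Theorem~\ref{Levin} on the blowup---is exactly right, and you have correctly identified the obstacle: the strict transforms $\tilde D_i = \pi^*D_i - \sum_j E_j$ need not satisfy the positivity hypotheses of Theorem~\ref{Levin}, since $\tilde D_i^n = D_i^n - r$ with $r = D_1\cdots D_n$ can be arbitrarily negative relative to the individual $D_i^n$. Your closing paragraph gestures at the right repair (``combining $\tilde D_i$ with suitable multiples of the $E_j$''), but stops short of executing it, so as written the argument has a genuine gap.

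The paper resolves this precisely along the lines you suggest, but with a specific and clean construction: instead of the $n$ strict transforms, use the $nm$ effective divisors
\[
G_{ij} := \pi^*D_i - E_j \;=\; \tilde D_i + \sum_{l\neq j} E_l, \qquad 1\le i\le n,\ 1\le j\le m.
\]
Two points make this work. First, $\bigcap_{i,j} G_{ij} = \emptyset$: off the exceptional locus this is $\bigcap_i D_i\setminus\{P_1,\ldots,P_m\}=\emptyset$, while on a fixed $E_{j_0}$ the divisors $G_{ij_0}$ restrict to the $n$ hyperplanes $\tilde D_i\cap E_{j_0}$ in $\PP^{n-1}$, whose intersection is empty by transversality. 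Second---and this is the key technical input---each $G_{ij}$ arises by subtracting a \emph{single} exceptional divisor, so its positivity reduces to the one-point blowup case. This is established separately as Theorem~\ref{bigness}: if $D$ is nef with $D^n>1$ then $\pi^*D - E$ is big, and if $D^n=1$ with $K_X\cdot D^{n-1}<1-n$ then $\kappa(\pi^*D - E)\ge n-1$. The proof of that theorem uses the exact sequence of Lemma~\ref{exact} together with Matsusaka's asymptotic Riemann--Roch (Theorem~\ref{RR}) to bound $h^0(m\pi^*D - mE)$ from below; your sketch of the asymptotic section count is in the right spirit, but the reduction to a single $E_j$ is what makes the numerical hypotheses sufficient. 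With the $G_{ij}$ in hand, Theorem~\ref{Levin} applies to the collection $\{G_{ij}\}$ on $\tilde X$, and pushing forward by $\pi$ gives the conclusion.
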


Combined with an appropriate version of the Hodge index theorem (Theorem \ref{HI}), this yields the following corollaries.

\begin{corollarya}\label{cor}
Let $X$ be an $n$-dimensional nonsingular projective variety over $\k$.  
Suppose that $-K_X$ is nef and 
\begin{align*}
(-K_X)^n>(n-1)^n.
\end{align*}
Let $D_1,\ldots, D_n$ be effective nef and big divisors on $X$ intersecting transversally.  Then  the image of an analytic map 
$f:\k\to X\setminus \cup_{i=1}^nD_i$ is contained in a proper subvariety of $X$.  
\end{corollarya}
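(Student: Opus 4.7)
The plan is to deduce the corollary directly from Theorem \ref{pxn}(1), using Theorem \ref{HI} only to handle one boundary case. Since each $D_i$ is nef and big on the nonsingular projective variety $X$, its top self-intersection $D_i^n$ is a positive integer, so $D_i^n \geq 1$. If $D_i^n \geq 2$, the first alternative in the hypothesis of Theorem \ref{pxn}(1) is satisfied for this index, and no further work is needed.

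It therefore suffices to treat the borderline case $D_i^n = 1$ and verify the second alternative, namely $K_X.D_i^{n-1} < 1-n$, or equivalently $(-K_X).D_i^{n-1} > n-1$. To this end I would invoke the Khovanskii--Teissier form of the Hodge index theorem (Theorem \ref{HI}) applied to the nef divisors $-K_X$ and $D_i$, which yields
\[
\bigl((-K_X).D_i^{n-1}\bigr)^n \;\geq\; (-K_X)^n \cdot (D_i^n)^{n-1}.
\]
Using the hypotheses $(-K_X)^n > (n-1)^n$ and $D_i^n = 1$, the right-hand side exceeds $(n-1)^n$. Because both $-K_X$ and $D_i$ are nef, the intersection number $(-K_X).D_i^{n-1}$ is non-negative, so extracting $n$-th roots gives $(-K_X).D_i^{n-1} > n-1$, which is exactly the required inequality.

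With the numerical hypothesis of Theorem \ref{pxn}(1) verified for every index $i$, the conclusion follows immediately from that theorem. The only step that is not purely mechanical is citing the correct generalized Hodge-type inequality for nef divisors; once Theorem \ref{HI} is available in the form above, the rest is a one-line numerical check, and I do not anticipate any further obstacle.
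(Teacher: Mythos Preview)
Your proposal is correct and follows essentially the same route as the paper: apply Theorem~\ref{HI} with one copy of $-K_X$ and $n-1$ copies of $D_i$ to obtain $\bigl((-K_X).D_i^{n-1}\bigr)^n \geq (-K_X)^n (D_i^n)^{n-1} > (n-1)^n$, use nefness to take $n$th roots, and then invoke Theorem~\ref{pxn}(a). The only cosmetic difference is that the paper establishes the inequality $K_X.D_i^{n-1} < 1-n$ uniformly for all $i$ rather than first separating off the easy case $D_i^n \geq 2$, but the substance of the argument is identical.
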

\begin{corollaryb} \label{corb}
Let $k=\QQ$ or an imaginary quadratic field.  Let $X$ be an $n$-dimensional nonsingular projective variety over $k$.   Suppose that $-K_X$ is nef and 
\begin{align*}
(-K_X)^n>(n-1)^n.
\end{align*}
Let $D_1,\ldots, D_n$ be effective nef and big divisors on $X$, all defined over $k$,  intersecting transversally in $X$.  Suppose that every point in the intersection $\cap_{i=1}^nD_i$ is $k$-rational.  Then any set $R$ of $\Ocal_k$-integral points on $X\setminus \cup_{i=1}^nD_i$ is contained in a proper Zariski-closed subset of $X$. 
\end{corollaryb}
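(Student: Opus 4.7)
The plan is to deduce Corollary \ref{corb} by verifying the hypotheses of Theorem \ref{pxnA} and invoking it directly. Most of what is needed there is immediate from our assumptions: the $D_i$ are effective, nef, defined over $k$, meet transversally, and every point of $\cap_{i=1}^n D_i$ is $k$-rational. So the real content is to check that for each $i$ the dichotomy holds: either $D_i^n > 1$, or $D_i^n = 1$ and $K_X \cdot D_i^{n-1} < 1-n$.

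Since each $D_i$ is nef and big, its top self-intersection is a positive integer, so $D_i^n \geq 1$. If $D_i^n \geq 2$ for a given index, there is nothing more to show. The subtle case is $D_i^n = 1$, and this is where the hypothesis $(-K_X)^n > (n-1)^n$ does its work. Here I would apply the Khovanskii--Teissier / Hodge index inequality (Theorem \ref{HI}) to the nef divisors $-K_X$ and $D_i$, which gives
\begin{equation*}
\bigl((-K_X)\cdot D_i^{n-1}\bigr)^n \;\geq\; (-K_X)^n \cdot (D_i^n)^{n-1}.
\end{equation*}
Substituting $D_i^n = 1$ and using the hypothesis yields
\begin{equation*}
\bigl((-K_X)\cdot D_i^{n-1}\bigr)^n \;>\; (n-1)^n.
\end{equation*}
Since $-K_X$ and $D_i^{n-1}$ are both nef, the left-hand side is nonnegative, so taking $n$-th roots is legitimate and gives $(-K_X)\cdot D_i^{n-1} > n-1$, i.e., $K_X \cdot D_i^{n-1} < 1-n$, exactly the required inequality.

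With both cases of the dichotomy verified for every $i$, Theorem \ref{pxnA}(a) applies and delivers precisely the conclusion of the corollary: every set of $\Ocal_k$-integral points on $X\setminus \cup_{i=1}^n D_i$ is contained in a proper Zariski-closed subset of $X$. The only non-routine step in this plan is the appeal to the Khovanskii--Teissier inequality; this is a standard black-box result, and it also explains why the threshold in the hypothesis takes the specific form $(n-1)^n$ rather than some other constant.
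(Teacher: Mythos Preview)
Your proof is correct and follows essentially the same route as the paper's: both use Theorem~\ref{HI} with one copy of $-K_X$ and $n-1$ copies of $D_i$ to obtain $(-K_X\cdot D_i^{n-1})^n \geq (-K_X)^n(D_i^n)^{n-1} > (n-1)^n$, then take $n$-th roots and invoke Theorem~\ref{pxnA}(a). The only cosmetic difference is that you split into the cases $D_i^n>1$ and $D_i^n=1$, whereas the paper simply uses $D_i^n\geq 1$ throughout to show $K_X\cdot D_i^{n-1}<1-n$ for all $i$ in one stroke.
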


The surfaces $X$ satisfying the hypotheses of Corollary \ref{cor} can be completely classified as follows. 
(cf.  \cite[Chapter III, Exercise 3.8]{Kol} ) 
\renewcommand{\thetheorem}{\arabic{section}.\arabic{theorem}}
\begin{remark}
Let $X$ be a smooth projective surface over an algebraically closed field.  Then 
$-K_X$ is nef and $(-K_X)^2>1$ if and only if $X$ is one of the following:
\begin{enumerate}
\item $X\cong \mathbf {P}({\mathcal O}_{\mathbb P^1}\oplus{\mathcal O}_{\mathbb P^1}(2))$ over ${\mathbb P^1}$;  
\item $X\cong {\mathbb P^2}$ or  $X\cong {\mathbb P^1}\times {\mathbb P^1}$;
\item $X$  is obtained from ${\mathbb P^2}$ by successively blowing up at most 7 points.
\end{enumerate}
\end{remark}

In contrast to the case of projective space (Theorem \ref{AWW2}) and the situation for degeneracy (Theorem \ref{pxn}), on general projective varieties, even under a transversality assumption,  there is no condition on the self-intersection numbers of the divisors or the degrees of the divisors (in some fixed projective embedding) sufficient to ensure that there are no non-constant analytic maps $f:\k\to X\setminus (D_1\cup\cdots \cup D_n)$.  A similar statement holds for integral points, but we give an example only in the non-Archimedean setting.

\begin{example}
\label{hyperbex}
Let $m$ and $n$ be positive integers.  Let $X=\mathbb{P}^1\times \mathbb{P}^1$ and let $D_1$ and $D_2$ be effective divisors of type $(1,m)$ and $(1,n)$, respectively.  Let $D=D_1+D_2$.  Let $P$ be a point in the intersection of the supports of $D_1$ and $D_2$ and let $L$ be the line on $X$ of type $(0,1)$ through $P$.  Then since $L\setminus D=L\setminus \{P\}\cong \mathbb{A}^1$, there exists a non-constant analytic map $f:\k\to X\setminus D$.  Note that $D_1$ and $D_2$ are very ample and $D_1^2=2m$, $D_2^2=2n$.  Thus, there is no condition on the self-intersection numbers of (very) ample divisors $D_1$ and $D_2$ on $X$ that is sufficient to guarantee that there are no non-constant analytic maps from $\k$ to $X\setminus (D_1\cup D_2)$.  Note also that for any embedding $X\subset \mathbb{P}^N$, the divisors $D_i$ may have arbitrarily large degree.
\end{example}

In the case of rational ruled surfaces, we are able to completely classify analytic maps $f:\k \to X\setminus D_1\cup D_2$ when $D_1$ and $D_2$ are ample effective divisors intersecting transversally.  We also prove the arithmetic analogue, again under the conditions \ref{cond1} and \ref{cond2}.

Recall that if $X$ is a rational ruled surface, then $X\cong \mathbf{P}(\mathcal{E})$, where $\mathcal{E}=\co\oplus \co(-e)$ over $\mathbb{P}^1$ and $e\geq 0$ is uniquely determined.  We let $F$ denote a fiber on $X$ and let $C_0$ denote a section of $X$ such that $\co(C_0)\cong \co_{\mathbf{P}(\mathcal{E})}(1)$. 
\renewcommand{\thetheorem}{\arabic{section}.\arabic{theorem}A}
\setcounter{theoremb}{\value{theorem}}
\begin{theorem}
\label{rationalsurface}
Let $X$ be a rational ruled surface over $\k$.  Let $D_1$ and $D_2$ be effective divisors intersecting transversally in $X$.  
\begin{enumerate}[label={\rm \theenumi}]
\item
If $D_1$ and $D_2$ are  big,
then  the image of an analytic map 
$f:\k\to X\setminus D_1\cup D_2$ is contained in a proper subvariety of $X$.
\item
Suppose that $D_1$ and $D_2$ are ample.  The image of an analytic map 
$f:\k\to X\setminus D_1\cup D_2$ is contained in either a fiber or a section $C$ with $C\sim C_0$.  There is no non-constant analytic map from $\k$ to $X\setminus D_1\cup D_2$ if and only if every fiber and every section $C$, $C\sim C_0$, intersects $D_1\cup D_2$ in more than one point.  In particular, this holds if $D_i.F\ge 2$ and $D_i.C_0\ge 2$ for $i=1,2$.\label{rsb}
\end{enumerate}
\end{theorem}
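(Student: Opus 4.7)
The plan is to combine Theorem \ref{pxn}(a), Theorem \ref{Pic}, and a local intersection analysis at the points of $D_1\cap D_2$.

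I would first deduce part (a) from Theorem \ref{pxn}(a). Writing $D_i\sim a_iC_0+b_iF$, bigness on $F_e$ forces $a_i,b_i\geq 1$. In the nef subcase ($b_i\geq a_ie$) one computes $D_i^2=a_i(2b_i-a_ie)\geq 1$, and in the boundary case $D_i^2=1$ (forcing $a_i=1$, $e$ odd, $b_i=(e+1)/2$) one computes $K_X\cdot D_i=-3<-1$, so Theorem \ref{pxn}(a) applies. If some $D_i$ is not nef then $C_0$ is a component of $D_i$ (the only negative curve on $F_e$); transversality of $D_1,D_2$ prevents both from containing $C_0$, so I would peel $kC_0$ off $D_i$ to obtain a nef remainder $D_i'$ and apply Theorem \ref{pxn}(a) to $(D_i',D_{3-i})$, handling the borderline case $D_i'\sim cF$ by a direct Theorem \ref{Pic} argument on $\pi_X\circ f$ (avoidance of two or more fiber components forces $\pi_X\circ f$ to be constant).

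For part (b), part (a) confines the image of any non-constant $f:\k\to X\setminus D_1\cup D_2$ to an irreducible curve $Y\subset X$. Lifting $f$ to the normalization $\tilde{Y}$ and applying Theorem \ref{Pic} to $\tilde{f}:\k\to\tilde{Y}$ forces $\tilde{Y}\cong\mathbb{P}^1$ and the preimage of $Y\cap(D_1\cup D_2)$ in $\tilde{Y}$ to consist of at most one point; hence $Y\cap(D_1\cup D_2)=\{P\}$ with $P$ a unibranch point of $Y$. I would then classify $Y\sim aC_0+bF$ by ruling out all classes other than $Y\sim F$ (a fiber) and $Y\sim C_0$. If $Y$ is a section other than $C_0$, so $a=1$ and $b\geq 1$, ampleness gives $Y\cdot D_i\geq 2$ for both $i$; as $Y$ and $D_i$ are smooth at $P$, this forces $T_PY=T_PD_i$ for each $i$, contradicting $T_PD_1\neq T_PD_2$. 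If $Y$ is a multisection, so $a\geq 2$ (and $b\geq 1$ by irreducibility), parametrize the unique branch of $Y$ at $P$ in local coordinates $(u,v)$ with $D_1=\{u=0\}$, $D_2=\{v=0\}$ as
\[
u(t)=t^{m_1}h_1(t),\quad v(t)=t^{m_2}h_2(t),\qquad h_i(0)\neq 0.
\]
Then $m_i=Y\cdot D_i$ and $\mathrm{mult}_P(Y)=\min(m_1,m_2)$; combining $\mathrm{mult}_P(Y)\leq Y\cdot F=a$ (the fiber through $P$ is smooth) with the ampleness estimate
\[
Y\cdot D_i=a(b_i-a_ie)+ba_i\geq a+b>a
\]
yields the contradiction $\min(m_1,m_2)>a\geq\mathrm{mult}_P(Y)=\min(m_1,m_2)$.

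The equivalence and the sufficient condition then follow by routine application of Theorem \ref{Pic}: a non-constant analytic map into a fiber or a section $C\sim C_0$ (each $\cong\mathbb{P}^1$) exists iff that curve meets $D_1\cup D_2$ in at most one point, and the same tangency obstruction used above shows that $D_i\cdot F\geq 2$ (resp.\ $D_i\cdot C_0\geq 2$) precludes the single-point scenario for fibers (resp.\ for sections $C\sim C_0$). The main obstacle is the multisection case in the classification, where the interplay between the local branch multiplicity $\min(m_1,m_2)$ and the global intersection $Y\cdot F$, combined with the strict ampleness bound, is what ultimately closes the argument.
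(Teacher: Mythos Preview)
Your argument for part (b) is essentially the paper's, repackaged: where the paper blows up at the single intersection point $x$ and compares $(C\cdot D_i)_x$ with $m_x(C)$, you parametrize the unique branch of $Y$ at $P$ and observe $\mathrm{mult}_P(Y)=\min(m_1,m_2)$. Both routes reduce to the same inequality $\mathrm{mult}_P(Y)\le Y\cdot F< Y\cdot D_i$, using that $D_i$ ample gives $b_i-a_ie\ge 1$. (Your separate tangent-line treatment of sections is correct but redundant; your multisection estimate already covers $a=1$, $b\ge 1$.)

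For part (a), however, your case split (nef versus not nef, then peel off $C_0$) leaves a genuine gap in the borderline case $D_i'\sim cF$. You assert that Theorem~\ref{Pic} applies to $\pi_X\circ f$ via ``avoidance of two or more fiber components,'' but nothing forces $D_i'$ to be supported on more than one fiber. Concretely, on $X_e$ with $e\ge 2$ take $D_i=C_0+F_0$: this is big, not nef, and after peeling $C_0$ you get $D_i'=F_0$, so $\pi_X\circ f$ omits only the single point $\pi_X(F_0)\in\mathbb{P}^1$. You cannot borrow a second fiber from $D_{3-i}$ either, since $D_{3-i}$ is big and nef but need have no fiber component at all; nor can you feed $C_0$ or $F_0$ back into Theorem~\ref{pxn}, as neither is big. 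The paper handles exactly this situation by a different mechanism: whenever some $D_i$ has \emph{all} components linearly equivalent to $C_0$ or $F$, bigness forces it to contain both a section $C\sim C_0$ and a fiber $F'$, and since these generate $\Pic X$ one finds a non-constant $\phi\in\k(X)^*$ with zeros and poles supported in $D_1\cup D_2$; then $\phi\circ f:\k\to\mathbb{G}_m$ is constant, so $f$ is degenerate. Your peeling strategy does not reach this conclusion without importing some such additional idea.
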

\begin{theoremb} 
\label{rationalsurfaceb}
Let $k=\QQ$ or an imaginary quadratic field.
Let $\pi:X\to \mathbb{P}^1$ be a rational ruled surface over $k$.  Let $D_1$ and $D_2$ be effective divisors intersecting transversally in $X$.   Suppose that  all the irreducible components of $D_1$ and $D_2$  are defined over $k$ and that all of the points in the intersection $D_1\cap D_2$ are $k$-rational.
\begin{enumerate}[label={\rm \theenumi}] 
\item 
If $D_1$ and $D_2$ are big,
then any set $R$ of $\Ocal_k$-integral points on $X\setminus D_1\cup D_2$ is contained in a proper Zariski-closed subset of $X$. 
\item 
Suppose that $D_1$ and $D_2$ are ample.  Then any set $R$ of $\Ocal_k$-integral points on $X\setminus D_1\cup D_2$ is contained in a finite union of fibers and sections $C$ with $C\sim C_0$.
Any set $R$ of $\Ocal_k$-integral points on $X\setminus D_1\cup D_2$ is finite if and only if every curve $C\subset X$ over $k$ which is linearly equivalent to $C_0$ or a  fiber $F$, intersects $D_1\cup D_2$ in more than one point.  In particular, this holds if $D_i.F\ge 2$ and $D_i.C_0\ge 2$ for $i=1,2$.
\end{enumerate}
\end{theoremb}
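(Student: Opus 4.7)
The plan is to mirror the proof of the non-Archimedean Theorem~\ref{rationalsurface} under conditions~\ref{cond1}--\ref{cond2}, translating each non-Archimedean step into its arithmetic analogue via the dictionary of \cite{ALW}. The key inputs will be: (i) Theorem~\ref{pxnA}, confining integral points to a proper Zariski-closed exceptional subset; (ii) Siegel's theorem in its sharpened form for $\co_k=\mathbb{Z}$ or the ring of integers of an imaginary quadratic field, replacing Berkovich's Picard theorem in detecting which curves support an infinite integral set; and (iii) intersection theory on the Hirzebruch surface $\mathbf{F}_e$.

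For part~(a), the divisors $D_i$ are big but possibly not nef (for $e\geq 1$), so Theorem~\ref{pxnA} does not apply directly. The plan is to extract from each big $D_i\sim a_iC_0+b_iF$ its nef part in the Zariski decomposition $D_i=P_i+N_i$, with $P_i$ nef and big and $N_i$ supported on $C_0$, and then enlarge the boundary by $C_0$: since the added curve is itself a section of the ruling and hence a proper closed subset of $X$, adding it to the exceptional set does not weaken the conclusion, while it allows us to invoke Theorem~\ref{pxnA}(a) on the resulting nef system.

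For part~(b), ampleness of $D_i\sim a_iC_0+b_iF$ forces $D_i^2=a_i(2b_i-a_ie)\geq e+2\geq 2$, so Theorem~\ref{pxnA}(b) places $R$ inside a proper Zariski-closed subset $Z$ up to a finite set. Each $1$-dimensional component $C$ of $Z$ carrying infinitely many points of $R$ is automatically defined over $k$, and $C\setminus(C\cap(D_1\cup D_2))$ then carries infinitely many $\co_k$-integral points; Siegel's theorem forces $C$ to be geometrically rational with $|C\cap(D_1\cup D_2)|\leq 1$ as closed points. A case analysis using the genus formula on $\mathbf{F}_e$, together with the positivity $C\cdot D_i\geq 1$ from ampleness, narrows $C$ to be either a fiber $F$ or a section $\sim C_0$. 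The biconditional is then immediate: in one direction, every curve in the finite union meets $D_1\cup D_2$ in at least two closed points by hypothesis and hence has finite integral set; in the other, a fiber or section $\sim C_0$ meeting $D_1\cup D_2$ in a single closed point has complement isomorphic to $\mathbb{A}^1_k$ over $k$ and trivially supplies an infinite $R$. The ``in particular'' clause follows by checking that under $D_i\cdot F\geq 2$ and $D_i\cdot C_0\geq 2$, together with transversality of $D_1,D_2$ and the $k$-rationality of $D_1\cap D_2$, every fiber and every section $\sim C_0$ necessarily meets $D_1\cup D_2$ in at least two distinct closed points.

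The main obstacle will be the curve-classification step in part~(b): reducing the a priori list of geometrically rational $k$-curves on $\mathbf{F}_e$ down to precisely fibers and sections $\sim C_0$. This requires combining the Siegel bound $|C\cap(D_1\cup D_2)|\leq 1$ with positivity and carefully tracking how a single closed-point intersection interacts with hypothesis~\ref{cond2}. It is precisely the $k$-rationality of $D_1\cap D_2$ that rules out the only potentially problematic case---a Pell-type infinite integral set on a non-split form of $\mathbb{G}_m$ whose residue field is a real quadratic extension of $\QQ$---and so guarantees the classification is complete.
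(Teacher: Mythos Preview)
Your overall architecture is right, but two of the load-bearing steps diverge from the paper's argument in ways that do not obviously go through.

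For part~(a), the paper does \emph{not} pass to a Zariski decomposition. It splits into cases: either each $D_i$ has an irreducible component $E_i\not\sim C_0,F$, in which case $E_i$ is automatically nef with $E_i^2\ge1$ and $K_X\cdot E_i\le-3$, so Theorem~\ref{pxnA} applies directly to the pair $E_1,E_2$ (transversality and $k$-rationality of $E_1\cap E_2\subset D_1\cap D_2$ are inherited); or some $D_i$ has every component $\sim C_0$ or $\sim F$, and then bigness forces both types to occur, so one builds a nonconstant $\phi\in k(X)^*$ with zeros and poles in $D_1\cup D_2$ and uses that $\phi(R)\subset\co_k^*$ is finite. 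Your Zariski decomposition $D_i=P_i+N_i$ yields only a $\mathbb{Q}$-divisor $P_i$ in general, and you have not explained how to recover an \emph{effective} nef divisor satisfying the transversality and $k$-rationality hypotheses of Theorem~\ref{pxnA}. Note also that transversality of $D_1,D_2$ forbids a common component, so at most one $D_i$ can contain $C_0$; the non-nef situation is thus more rigid than your outline suggests.

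For part~(b), your appeal to ``the genus formula together with $C\cdot D_i\ge1$'' to pin $C$ down to a fiber or a section $\sim C_0$ is the crux, and this is not how the paper proceeds---nor do I see how it could work, since $\mathbf{F}_e$ carries rational curves of every bidegree $(c,d)$ with $d\ge ce$. After reducing (as you correctly do, via Siegel, Theorem~\ref{Levin}\ref{Levinc}, and condition~\ref{cond2}) to $C\cap D_1=C\cap D_2=\{x\}$ with $x\in D_1\cap D_2$ $k$-rational, the paper blows up $x$: transversality of $D_1,D_2$ makes the strict transforms $\tilde D_1,\tilde D_2$ meet the exceptional divisor in distinct points, so if $(C\cdot D_i)_x>m_x(C)$ for both $i$ then $\tilde C$ meets $\tilde D_1\cup\tilde D_2$ in at least two points and one reruns the Siegel/Theorem~\ref{Levin}\ref{Levinc} argument on $\tilde C$. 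Hence $(C\cdot D_i)_x=m_x(C)$ for some $i$, and then the explicit computation $C\cdot D_i=a_id+c(b_i-a_ie)>c\ge m_x(C)$ (using $C\cdot F=c$ and the fiber through $x$) forces $c=0$ or $d=0$, i.e.\ $C\sim F$ or $C\sim C_0$. This blow-up and local-multiplicity step is the missing ingredient in your plan; your treatment of the Pell obstruction via~\ref{cond2} is correct and matches the paper.
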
 
\section{Preliminaries}
\label{prelim}
\renewcommand{\thetheorem}{\arabic{section}.\arabic{theorem}}
 
We first  introduce some  notation and definitions.

Let $k$ be a number field and let $M_k$ denote the set of inequivalent places of $k$.  Let $D$ be an effective divisor on a nonsingular projective variety $X$, both defined over $k$.  Recall that we can associate to $D$ a height function $h_D$ (well-defined up to $O(1)$) which for points $P\in X(k)\setminus D$ decomposes as a sum of local height functions $h_D=\sum_{v\in M_k}h_{D,v}$.  Let $S$ be a finite set of places of $k$.  A set of points $R\subset X(k)\setminus D$ is called a set of $\Ocal_{k,S}$-integral points on $X\setminus D$ if there exist constants $c_v$, $v\in M_k$, such that $c_v=0$ for all but finitely many $v$, and for all $v\not\in S$,
\begin{equation*}
h_{D,v}(P)\leq c_v
\end{equation*}
for all $P\in R$.  We refer the reader to \cite{Vo} for further details on height functions and integral points.

Let $D$ be a divisor on a nonsingular projective variety $X$, both defined over a field $k$.  For a nonzero rational function $\phi\in k(X)$, we let $\dv(\phi)$ denote the divisor associated to $\phi$.  Then we let $L(D)=\{\phi\in k(X)\mid \dv(\phi)+D\geq 0\}$ and $h^0(D)=\dim H^0(X,\co(D))=\dim L(D)$.  If $h^0(nD)=0$ for all $n>0$ then we let $\kappa(D)=-\infty$.  Otherwise, we define the Kodaira-Iitaka dimension of $D$ to be the integer $\kappa(D)$ such that there exist positive constants $c_1$ and $c_2$ with
\begin{equation*}
c_1 n^{\kappa(D)} \leq h^0(nD)\leq c_2 n^{\kappa(D)}
\end{equation*}
for all sufficiently divisible $n>0$.  We define a divisor $D$ on $X$ to be {\it big} if $\kappa(D)=\dim X$.

\begin{definition}
A divisor $D$ on $X$ is said to be numerically effective, or {\it nef}, if $D\cdot C\ge 0$ for any irreducible curve $C$ on $X$.
\end{definition}

We recall some basic properties of nef divisors on $X$; see \cite{Kle}.

\begin{lemma}
Nef divisors satisfy the following:
\begin{enumerate}[label={\rm \theenumi}] 
\item  Let $n=\dim X$.  If $D_1,\hdots, D_n$ are nef divisors on $X$, then 
$$D_1\cdot D_2\cdots D_n\ge 0.$$
\item  Let $f:X\to Y$ be a morphism and let $D$ be a nef divisor on $Y$ with $f(X)$ not contained in the support of $D$.  Then $f^*(D)$ is nef on $X$.
\end{enumerate}
\end{lemma}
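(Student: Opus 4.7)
The plan is to treat the two parts separately: a continuity/approximation argument from the ample cone for part (a), and the projection formula for part (b).

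For part (a), I would fix an ample divisor $H$ on $X$ and exploit Kleiman's criterion, which identifies the nef cone with the closure of the ample cone in $N^{1}(X)_{\mathbb{R}}$. In particular, if $D_i$ is nef then $D_i + \epsilon H$ is an ample $\mathbb{R}$-divisor for every $\epsilon > 0$. Since the intersection pairing is multilinear and continuous on $N^{1}(X)_{\mathbb{R}}$,
$$D_{1}\cdot D_{2}\cdots D_{n} \;=\; \lim_{\epsilon\to 0^{+}} (D_{1}+\epsilon H)(D_{2}+\epsilon H)\cdots (D_{n}+\epsilon H),$$
so it suffices to show that the top intersection number of $n$ ample $\mathbb{R}$-divisors on an $n$-dimensional projective variety is strictly positive. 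After clearing denominators and passing to large integer multiples, one may assume each factor is very ample, and then a standard Bertini/general-position argument represents the $n$ factors by hypersurfaces whose common intersection is a nonempty finite (hence positive) set of transverse points.

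For part (b), the projection formula is all that is needed. For any irreducible curve $C\subset X$, one has $f^{*}(D)\cdot C = D\cdot f_{*}(C)$. If $f$ contracts $C$ to a point then $f_{*}(C)=0$ and the intersection vanishes; otherwise $f|_{C}$ is generically finite of some degree $d\geq 1$ and $f_{*}(C) = d\cdot f(C)$ is a positive multiple of an irreducible curve in $Y$, so the nefness of $D$ yields $D\cdot f(C)\geq 0$. The hypothesis that $f(X)$ is not contained in $\mathrm{Supp}(D)$ is invoked only to guarantee that $f^{*}(D)$ is well defined as a Cartier divisor class in the first place.

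I do not anticipate a serious obstacle here, since both assertions are foundational in the theory of nef divisors. The one input that is not entirely formal is Kleiman's theorem identifying the nef cone with the closure of the ample cone, together with the positivity of top intersections of ample divisors; both are classical and may simply be cited (e.g.\ from Kleiman's paper, already referenced in the excerpt).
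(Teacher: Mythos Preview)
Your argument is correct and entirely standard. There is, however, nothing to compare it against: the paper does not prove this lemma at all. It is stated as a recalled fact with the reference ``see \cite{Kle}'' and no \texttt{proof} environment follows. Both parts of your sketch are the usual proofs one finds in the literature (e.g.\ Lazarsfeld's \emph{Positivity} or Kleiman's original paper): the perturbation-by-an-ample argument for (a) and the projection formula for (b). So your proposal is fine, but be aware that the paper itself treats this lemma as background and simply cites it.
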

We recall the following generalized Hodge index theorem;  see \cite[Theorem 1.6.1]{Laz}.
\begin{theorem}\label{HI}
Let $D_1,\hdots, D_n$ be nef  divisors on an $n$-dimensional nonsingular projective variety $X$.  Then 
$(D_1\cdot\hdots\cdot D_n)^n\ge D_1^n\cdot \hdots \cdot D_n^n.$
\end{theorem}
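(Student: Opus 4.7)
The plan is to prove Theorem \ref{HI} in three stages: reduce to the case of ample divisors, establish a two-divisor Khovanskii--Teissier inequality by induction from the surface Hodge index theorem, and iterate to recover the full symmetric statement.

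First, I would reduce to the case where each $D_i$ is ample. If $D_i^n = 0$ for some $i$, the right-hand side vanishes and the inequality follows from the nonnegativity statement (a) of the preceding lemma. Otherwise, fix an ample divisor $A$ on $X$; then $D_i+\epsilon A$ is an ample $\mathbb{R}$-divisor for every $\epsilon>0$, and since intersection numbers depend continuously on real coefficients it suffices to prove the inequality with each $D_i$ replaced by $D_i+\epsilon A$. Clearing denominators, we may assume each $D_i$ is ample and integral.

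Second, I would establish the two-divisor Khovanskii--Teissier inequality: for nef divisors $A,B$ on an $n$-dimensional projective variety,
\begin{equation*}
(A^{n-1}\cdot B)^2 \ \ge\ (A^n)\,(A^{n-2}\cdot B^2).
\end{equation*}
For $n=2$ this is the classical surface Hodge index theorem: if $A,B$ are nef on a surface with $A$ ample, the class $(A^2)B-(A\cdot B)A$ is numerically orthogonal to the ample class $A$, hence has nonpositive self-intersection, and expanding gives $(A\cdot B)^2\ge (A^2)(B^2)$; the case $A^2=0$ is trivial. For $n\ge 3$, apply Bertini to a sufficiently divisible multiple of $A$ to produce an irreducible divisor $Y$ representing (a multiple of) $A$, restrict the remaining classes to $Y$, and induct on the dimension to reduce to the surface case. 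The same Bertini reduction yields the more general mixed form
\begin{equation*}
(D_1\cdot D_2\cdots D_n)^2 \ \ge\ (D_1^2\cdot D_3\cdots D_n)\,(D_2^2\cdot D_3\cdots D_n).
\end{equation*}

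Third, I would iterate. The pairwise inequality of Step 2 expresses log-concavity of $k\mapsto \log(A^{n-k}\cdot B^k)$, which upon chaining yields $(A^{n-1}\cdot B)^n\ge (A^n)^{n-1}(B^n)$. Feeding this, together with the mixed Alexandrov--Fenchel form, into an induction on $n$ — at each stage splitting off one of the $D_i$ and applying the two-divisor statement to the remaining mixed intersection — produces the desired symmetric inequality $(D_1\cdots D_n)^n\ge \prod_i D_i^n$ via an AM--GM-type combination of the resulting factors.

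The main obstacle will be the combinatorial bookkeeping in the third stage: assembling the pairwise log-concavity statements into the full $n$-fold symmetric product is formal but notationally intricate, and one must be careful to handle the degenerate cases where some mixed intersection number vanishes. A clean execution of this iteration is carried out in \cite[Sect.~1.6]{Laz}, to which the theorem statement already refers.
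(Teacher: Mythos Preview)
The paper does not supply its own proof of Theorem~\ref{HI}; it is stated as a known result with the reference ``see \cite[Theorem 1.6.1]{Laz}'' and is used only as a tool in the proof of Corollaries~\ref{cor} and~\ref{corb}. Your three-stage outline (reduction to ample divisors by perturbation, the two-divisor Khovanskii--Teissier inequality via induction on dimension and the surface Hodge index theorem, then iteration to the symmetric $n$-fold statement) is precisely the argument given in the cited section of Lazarsfeld, so there is nothing to compare: you have reconstructed the proof the paper defers to, and your sketch is correct, including your honest flag that the combinatorics in the third stage require care.
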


When $X$ is a surface and $n=2$, this holds without the nef hypothesis as long as $D_1^2>0$.

For nef divisors, we have the following asymptotic Riemann-Roch formula.

\begin{lemma}\label{big}
Suppose $D$ is a nef divisor on a nonsingular projective variety $X$.  Let $n=\dim X$.
Then $h^0(mD)=\frac{D^n}{n!} \cdot m^n+O(m^{n-1})$. In particular, $D^n>0$ if and only if $D$ is big.
\end{lemma}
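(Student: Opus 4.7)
The plan is to combine the asymptotic Hirzebruch--Riemann--Roch formula with an asymptotic vanishing statement for the higher cohomology of nef line bundles.

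First, I would apply Hirzebruch--Riemann--Roch to $\co(mD)$. Expanding $\operatorname{ch}(\co(mD))\cdot\operatorname{td}(X)$ and extracting the top-dimensional component, one obtains the well-known expansion
\begin{equation*}
\chi(X,\co(mD)) \;=\; \frac{D^n}{n!}\,m^n + O(m^{n-1}),
\end{equation*}
which is valid for an arbitrary divisor $D$ and does not use the nef hypothesis.

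Next I would invoke an asymptotic vanishing result for the higher cohomology of a nef divisor, namely
\begin{equation*}
h^i(X,\co(mD)) = O(m^{n-1}) \quad \text{for every } i\ge 1,
\end{equation*}
as $m\to\infty$. This is the one nontrivial ingredient, and I expect it to be the main technical obstacle in writing out the proof in detail. The cleanest route is to appeal to Fujita's vanishing theorem: for nef $D$ and any ample $A$ there exists $m_0$ with $H^i(X,\co(mD+A))=0$ for $m\ge m_0$ and $i\ge 1$; combining this with the short exact sequences obtained by cutting with a sufficiently general smooth member of $|A|$ and inducting on $\dim X$ gives the desired polynomial bound on $h^i(X,\co(mD))$. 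Alternatively, one can use the fact that for nef $D$ the class $c_1(D)$ lies in the closure of the ample cone and approximate by rational ample classes, for which Kodaira vanishing applies.

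Combining the two previous steps gives
\begin{equation*}
h^0(X,\co(mD)) \;=\; \chi(X,\co(mD)) + \sum_{i\ge 1}(-1)^i\,h^i(X,\co(mD)) \;=\; \frac{D^n}{n!}\,m^n + O(m^{n-1}),
\end{equation*}
which is the first assertion. For the ``in particular'' part, recall that $D^n\ge 0$ for any nef divisor. If $D^n>0$, the displayed formula forces $h^0(mD)$ to grow like $m^n$, so $\kappa(D)=n$ and $D$ is big by definition. Conversely, if $D$ is big, then by definition $h^0(mD)\ge c_1 m^n$ for some $c_1>0$ and infinitely many $m$, which is incompatible with the formula unless $D^n>0$ (since $D^n\ge 0$ already, we conclude $D^n>0$). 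This completes the proof sketch.
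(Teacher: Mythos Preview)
The paper does not supply a proof of this lemma; it is recorded as a known asymptotic Riemann--Roch statement, with the neighboring citations to \cite{Laz}, \cite{Kol}, and \cite{Mat} indicating where such results are found.  Your outline is the standard argument and is correct in substance: Hirzebruch--Riemann--Roch gives the leading term of $\chi(mD)$, and the bound $h^i(mD)=O(m^{n-1})$ for $i\ge 1$ when $D$ is nef then transfers this to $h^0$.

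Two minor points are worth correcting.  First, the displayed identity should read
\[
h^0(X,\co(mD))\;=\;\chi(X,\co(mD))-\sum_{i\ge 1}(-1)^i\,h^i(X,\co(mD)),
\]
i.e.\ with a minus sign rather than a plus; this is harmless for the asymptotic conclusion but is a typo as written.  Second, since the paper works over a field $\k$ of arbitrary characteristic, the alternative route you sketch via approximation by rational ample classes and Kodaira vanishing is not available in general.  The induction on $\dim X$ by restricting to a general member of a very ample linear system (your primary suggestion) is the characteristic-free argument, and that is how the higher-cohomology bound is established in the standard references; Fujita vanishing itself, in the form you invoke, also ultimately rests on Kodaira-type input in characteristic zero, so if you write this out you should run the d\'evissage directly rather than pass through Fujita.
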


The next theorem gives a more refined
bound for the dimension of the space of global sections of nef and big divisors (cf.  \cite{Mat} or \cite[Chapter VI Theorem 2.15.9]{Kol}).

\begin{theorem}[Matsusaka]\label{RR}
Let $X$ be a nonsingular projective variety of dimension $n$ and let $D$ be a nef and big divisor on $X$.  Then
\begin{align*}
h^0(mD)=\frac{D^n}{n!}m^n-\frac{K_X.D^{n-1}}{2(n-1)!}m^{n-1}+O(m^{n-2}).
\end{align*}
\end{theorem}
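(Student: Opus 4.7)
The plan is to combine the Hirzebruch--Riemann--Roch formula for $\chi(X, \Ocal(mD))$ with a sufficiently sharp asymptotic vanishing of the higher cohomology of $\Ocal(mD)$, the point being to refine Lemma~\ref{big} by pinning down the subleading coefficient. For Step~1, Hirzebruch--Riemann--Roch gives
\[
  \chi(X, \Ocal(mD)) = \int_X \operatorname{ch}(\Ocal(mD)) \cdot \operatorname{td}(T_X).
\]
Expanding $\operatorname{ch}(\Ocal(mD)) = \sum_{k \geq 0} (mD)^k/k!$ and $\operatorname{td}(T_X) = 1 - \tfrac{1}{2}K_X + \cdots$, the degree-$n$ component of the integrand consists of $\tfrac{(mD)^n}{n!}$, then $-\tfrac{K_X \cdot (mD)^{n-1}}{2(n-1)!}$, and finally contributions $\tfrac{(mD)^k}{k!} \cdot \operatorname{td}_{n-k}(T_X)$ with $k \leq n-2$, each a fixed intersection number times $m^k$. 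Integrating yields
\[
  \chi(X, \Ocal(mD)) = \frac{D^n}{n!} m^n - \frac{K_X \cdot D^{n-1}}{2(n-1)!} m^{n-1} + O(m^{n-2}).
\]

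Step~2 is to show $h^i(X, \Ocal(mD)) = O(m^{n-2})$ for every $i \geq 1$; combined with Step~1 this gives the claimed formula. The idea is to exploit that $D$ is nef and big to arrange that $mD - K_X$ becomes suitably positive for $m$ large. Since $D$ is big, Kodaira's lemma writes $D \equiv_\QQ A + E$ with $A$ an ample $\QQ$-divisor and $E$ effective, and a perturbation argument then shows $mD - K_X$ is nef and big for $m \gg 0$. In characteristic zero, Kawamata--Viehweg applied to $\Ocal(mD) = \Ocal(K_X + (mD - K_X))$ gives $H^i(X, \Ocal(mD)) = 0$ for $i \geq 1$ and $m$ large. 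In arbitrary characteristic one proceeds inductively on $n$: take $m_0$ with $|m_0 D|$ containing a smooth irreducible member $Y$ (using basepoint-freeness results for large multiples of nef and big divisors combined with Bertini), and apply
\[
  0 \to \Ocal((m - m_0)D) \to \Ocal(mD) \to \Ocal_Y(mD|_Y) \to 0
\]
to bound $h^i(X, \Ocal(mD))$ in terms of cohomology on the $(n-1)$-dimensional $Y$, where the inductive hypothesis applies.

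The main obstacle is Step~2. The elementary bound $h^i(X, \Ocal(mD)) = O(m^{n-1})$ for nef divisors (which is what one obtains for free from $h^0 \sim m^n$ together with Lemma~\ref{big}) is not sharp enough, and pulling the error term all the way down to $O(m^{n-2})$ requires genuine positivity input: one must actually verify the nefness of $mD - K_X$ (rather than just its bigness), which is a delicate point when $D$ is nef but not ample.
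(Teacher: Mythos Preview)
The paper does not prove this theorem; it is quoted from the literature with references to \cite{Mat} and \cite[Chapter~VI, Theorem~2.15.9]{Kol}. So there is no in-paper argument to compare against, and the only question is whether your sketch stands on its own.

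Step~1 is correct. The difficulty, as you yourself flag, lies entirely in Step~2, and neither of the routes you propose actually closes it. For the characteristic-zero argument, the claim that $mD-K_X$ becomes nef for $m\gg 0$ is false in general when $D$ is only nef and big: if $C\subset X$ is an irreducible curve with $D\cdot C=0$ and $K_X\cdot C>0$, then $(mD-K_X)\cdot C<0$ for every $m$. Such curves genuinely occur---for instance, take $\pi:X\to\PP^2$ an iterated blowup producing a $(-3)$-curve $C$ in a fibre of $\pi$, and set $D=\pi^*H$ for $H$ a line; then $D$ is nef and big, $D\cdot C=0$, and $K_X\cdot C=1$ by adjunction. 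So Kawamata--Viehweg is unavailable, and the caveat in your final paragraph is not merely a ``delicate point'' but an actual obstruction. For the arbitrary-characteristic induction, you assume that $|m_0 D|$ is basepoint-free for some $m_0$ so that Bertini yields a smooth $Y\in|m_0 D|$; but nef and big divisors need not be semiample, so no such $m_0$ is guaranteed to exist. The proofs in \cite{Mat} and \cite{Kol} control $h^0(mD)-\chi(mD)$ by a more careful d\'evissage (slicing by a fixed very ample divisor rather than by $|mD|$ itself, together with Fujita-type vanishing) that avoids both of these unavailable positivity hypotheses, and reproducing that argument is what a complete proof would require.
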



We will also make use of two basic exact sequences:
 
\begin{lemma}
\label{exact}
Let $D$ be an effective divisor on a nonsingular projective variety $X$ with inclusion map $i:D \to X$.  Let $\mathcal{L}$ be an invertible sheaf on $X$.  Then we have exact sequences
\begin{align*}
&0 \to \mathcal{L}\otimes \co(-D) \to \mathcal{L} \to i_{*}(i^*\mathcal{L}) \to 0,\\
&0 \to H^0(X,\mathcal{L}\otimes \co(-D))\to H^0(X,\mathcal{L}) \to H^0(D,i^*\mathcal{L}).
\end{align*}
\end{lemma}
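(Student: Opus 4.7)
The plan is to derive both sequences from the standard short exact sequence associated to the ideal sheaf of $D$, using that $D$ is an effective Cartier divisor (which is automatic since $X$ is nonsingular) and that the global sections functor is left exact.

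\textbf{Step 1: The first sequence.} I would start from the short exact sequence of sheaves
\begin{equation*}
0 \to \mathcal{I}_D \to \co_X \to i_*\co_D \to 0,
\end{equation*}
where $\mathcal{I}_D$ is the ideal sheaf of $D$. Because $D$ is effective and $X$ is nonsingular, $D$ is Cartier and $\mathcal{I}_D \cong \co_X(-D)$ as invertible sheaves. Now I would tensor this sequence with the invertible (hence flat, locally free) sheaf $\mathcal{L}$; since $\mathcal{L}$ is locally free, tensor product preserves exactness, giving
\begin{equation*}
0 \to \mathcal{L}\otimes \co(-D) \to \mathcal{L} \to \mathcal{L}\otimes i_*\co_D \to 0.
\end{equation*}
To identify the last term with $i_*(i^*\mathcal{L})$, I would apply the projection formula $\mathcal{L} \otimes i_*\co_D \cong i_*(i^*\mathcal{L} \otimes \co_D) = i_*(i^*\mathcal{L})$, which is valid here because $\mathcal{L}$ is locally free of finite rank.

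\textbf{Step 2: The second sequence.} I would then apply the left-exact global sections functor $H^0(X,-)$ to the exact sequence from Step 1. Left-exactness gives exactness of
\begin{equation*}
0 \to H^0(X,\mathcal{L}\otimes \co(-D)) \to H^0(X,\mathcal{L}) \to H^0(X, i_*(i^*\mathcal{L})).
\end{equation*}
Finally, since $i$ is a closed immersion, $H^0(X, i_*\mathcal{F}) = H^0(D,\mathcal{F})$ for any sheaf $\mathcal{F}$ on $D$, which yields the claimed sequence.

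There is no real obstacle here — this is essentially bookkeeping with standard constructions. The only substantive ingredients are that an effective divisor on a nonsingular variety is Cartier (so $\mathcal{I}_D \cong \co(-D)$), the projection formula for locally free sheaves along a closed immersion, and the left-exactness of $H^0$. All are formal consequences of material available at this level of the paper.
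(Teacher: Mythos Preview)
Your proof is correct and follows essentially the same approach as the paper: start from the fundamental exact sequence $0 \to \co(-D) \to \co_X \to i_*\co_D \to 0$, tensor with $\mathcal{L}$ and apply the projection formula to obtain the first sequence, then take global sections for the second. Your write-up simply makes a few standard identifications (e.g., $\mathcal{I}_D \cong \co(-D)$ and $H^0(X,i_*\mathcal{F}) = H^0(D,\mathcal{F})$) more explicit than the paper does.
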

\begin{proof}
If $D$ is an effective divisor on $X$, then a fundamental exact sequence is
\begin{equation*}
0 \to \co(-D) \to \mathcal{O}_X \to i_{*} \mathcal{O}_D \to 0.
\end{equation*}
Tensoring with $\mathcal{L}$ and using the projection formula, we get the first exact sequence.  Taking global sections then gives the second exact sequence.
\end{proof}

\section{Proof of Theorem \ref{pxn} and Theorem \ref{pxnA}}
\label{CHD}

\renewcommand{\thetheorem}{\arabic{section}.\arabic{theorem}}

We first prove a theorem controlling the Kodaira-Iitaka dimension of certain divisors on blow-ups.

\begin{theorem}\label{bigness}
Let $X$ be a nonsingular projective variety of dimension $n$ over a field $k$ and let $D$ be a nef divisor on $X$.  Let $K_X$ denote the canonical divisor on $X$.
Let $\pi:\tilde X\to X$ be the blow-up along $m$ distinct points $P_1,\hdots,P_m$ of $X$, successively, and let $E_i:=\pi^{-1}(P_i)$ 
be the exceptional divisor for $1\le i\le m$.
\begin{enumerate}[label={\rm \theenumi}]
\item  If $D^n>1$, then $\pi^*D-E_i$ is big for each $1\le i\le m$.\label{biga}
\item  If $D^n=1$ and $K_X.D^{n-1}<1-n$, then $\kappa(\pi^*D-E_i)\geq n-1$ for each $1\le i\le m$.\label{bigb}
\end{enumerate}
\end{theorem}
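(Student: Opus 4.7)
The plan is to establish the single key inequality
$$h^0\bigl(\tilde X,\, m(\pi^*D - E_i)\bigr) \;\geq\; h^0(X,\, mD) - \binom{m+n-1}{n}$$
valid for all $m\ge 1$, and then extract both parts by comparing leading asymptotics. For the inequality I would iterate Lemma~\ref{exact} along $E_i$, taking $\mathcal{L}_k = \mathcal{O}(m\pi^*D - kE_i)$ for $k=0,1,\ldots,m-1$. Because the $P_j$ are distinct, $E_i$ is a disjoint $\mathbb{P}^{n-1}$ on $\tilde X$ with $\mathcal{O}(-E_i)|_{E_i}\cong \mathcal{O}_{\mathbb{P}^{n-1}}(1)$, and $\pi^*D|_{E_i}$ is trivial, so the restriction $\mathcal{L}_k|_{E_i}\cong \mathcal{O}_{\mathbb{P}^{n-1}}(k)$ has $h^0=\binom{k+n-1}{n-1}$. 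Telescoping the exact sequences, the hockey-stick identity $\sum_{k=0}^{m-1}\binom{k+n-1}{n-1}=\binom{m+n-1}{n}$, together with the birational invariance $h^0(m\pi^*D)=h^0(mD)$, produce the inequality.

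For part~\ref{biga}, Lemma~\ref{big} gives $h^0(mD)=\frac{D^n}{n!}m^n+O(m^{n-1})$ and the elementary expansion $\binom{m+n-1}{n}=\frac{m^n}{n!}+O(m^{n-1})$. The resulting lower bound grows like $\frac{D^n-1}{n!}\,m^n$; since $D^n>1$, this is of order $m^n$, forcing $\kappa(\pi^*D-E_i)=n$, so $\pi^*D-E_i$ is big.

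For part~\ref{bigb}, I would first note that $D^n=1>0$ and $D$ nef imply $D$ is big (Lemma~\ref{big}), so Matsusaka's theorem (Theorem~\ref{RR}) applies and gives
$$h^0(mD)=\frac{m^n}{n!}-\frac{K_X\cdot D^{n-1}}{2(n-1)!}\,m^{n-1}+O(m^{n-2}).$$
The refined expansion $\binom{m+n-1}{n}=\frac{m^n}{n!}+\frac{n-1}{2(n-1)!}\,m^{n-1}+O(m^{n-2})$ makes the $m^n$-terms cancel, and the coefficient of $m^{n-1}$ in the lower bound becomes $\frac{-K_X\cdot D^{n-1}-(n-1)}{2(n-1)!}$, which is strictly positive precisely under the stated hypothesis $K_X\cdot D^{n-1}<1-n$. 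Hence $h^0(m(\pi^*D-E_i))\gtrsim m^{n-1}$, giving $\kappa(\pi^*D-E_i)\ge n-1$. The delicate point is that (b) is numerically tight: one must use Matsusaka's refinement rather than the cruder Lemma~\ref{big}, expand $\binom{m+n-1}{n}$ to the same precision, and verify that the cancellation of the $m^n$-coefficients and the sign of the $m^{n-1}$-coefficient line up exactly with the stated numerical inequality.
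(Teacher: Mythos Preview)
Your proof is correct and follows essentially the same route as the paper's: iterate the exact sequence of Lemma~\ref{exact} along $E_i$, identify the restriction as $\mathcal{O}_{\mathbb{P}^{n-1}}(k)$, telescope via the hockey-stick identity to obtain $h^0(m\pi^*D)-h^0(m(\pi^*D-E_i))\leq \binom{m+n-1}{n}$, and then read off both conclusions from the asymptotics (using Matsusaka's Theorem~\ref{RR} for the tight case~\ref{bigb}). The only organizational differences are that the paper first reduces to the single-point blow-up (via a factorization $\pi=\tilde\pi_i\circ\epsilon_i$ and the inequality $h^0(m(\pi^*D-E_i))\geq h^0(m(\tilde\pi_i^*D-\tilde E_i))$) before running the argument, and applies Theorem~\ref{RR} uniformly for both parts rather than using the coarser Lemma~\ref{big} for~\ref{biga}.
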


\begin{remark}
It is easy that part \ref{biga} is sharp.  If $H$ is a hyperplane in $\mathbb{P}^n$ and $P=P_1\in H$, $m=1$, then $H^n=1$ and $\pi^*H-E$ is not big.  Similarly, at least for $n=2$, part \ref{bigb} is also sharp.  Let $P_1,\ldots, P_9\in\mathbb{P}^2$ be nine points in the plane with a unique cubic curve $C$ passing through the nine points.  Let $\pi:X\to\mathbb{P}^2$ be the blow up at $P_1,\ldots, P_8$, with corresponding exceptional divisors $E_1,\ldots, E_8$.  Then $K_X=\pi^*(-3L)+\sum_{i=1}^8E_i$, where $L$ is a line in $\mathbb{P}^2$.  Let $D=\pi^*C-\sum_{i=1}^8E_i\sim -K_X$ be the strict transform of $C$. Then $D^2=1$ and $D.K_X=-1$.  Let $\tilde{\pi}:\tilde X\to X$ be the blow up of $X$ at $P_9$ and let $E$ be the exceptional divisor.  Then $\kappa(\tilde{\pi}^*D-E)=0$.
\end{remark}
\begin{proof}
We first treat the case of blowing-up one point, i.e. $m=1$.  For simplicity of notation, we let
$P=P_1$ and $E=E_1$.
Let $i:E\to \tilde{X}$ be the inclusion map.  From Lemma \ref{exact}, we have an exact sequence
\begin{align*}
0\to H^0(\tilde{X},\co(m\pi^*D-(j+1)E)) \to H^0&(\tilde{X},\co(m\pi^*D-jE))\\
&\to H^0(E,i^*\co(m\pi^*D-jE)).
\end{align*}
Now since $\pi^*D.E=0$, $i^*\co(m\pi^*D-jE)\cong i^*\co(-jE)$.  Recall that we can identify $E$ with $\mathbb{P}^{n-1}$.  Under this identification, $i^*\co(-E)\cong \co_{\mathbb{P}^{n-1}}(1)$.  It follows that
\begin{align*}
\dim H^0(E,i^*\co(m\pi^*D-jE))&=\dim H^0(E,i^*\co(-jE))=\dim H^0(\mathbb{P}^{n-1},\co(j))\\
&=\binom{j+n-1}{n-1}.
\end{align*}
Thus,
\begin{align*}
h^0(m\pi^*D-jE)-h^0(m\pi^*D-(j+1)E)\leq \binom{j+n-1}{n-1}.
\end{align*}
Then
\begin{align*}
h^0(m\pi^*D)-h^0(m\pi^*D-mE)&=\sum_{j=0}^{m-1}h^0(m\pi^*D-jE)-h^0(m\pi^*D-(j+1)E)\\
&\leq \sum_{j=0}^{m-1}\binom{j+n-1}{n-1}=\binom{m+n-1}{n}\\
&\leq \frac{m^n}{n!}+\frac{(n-1)n}{2}\frac{m^{n-1}}{n!}+O(m^{n-2})\\
&\leq \frac{m^n}{n!}+\frac{1}{2}\frac{m^{n-1}}{(n-2)!}+O(m^{n-2}).
\end{align*}
Using Theorem \ref{RR} to compute $h^0(m\pi^*D)$, we find that
\begin{align*}
h^0(m\pi^*D-mE)\geq \frac{D^n}{n!}m^n-\frac{K_X.D^{n-1}}{2(n-1)!}m^{n-1}-\left(\frac{m^n}{n!}+\frac{1}{2}\frac{m^{n-1}}{(n-2)!}\right)+O(m^{n-2}).
\end{align*}
Simplifying, we get
\begin{align*}
h^0(m\pi^*D-mE)\geq \frac{D^n-1}{n!}m^n-\frac{K_X.D^{n-1}+(n-1)}{2(n-1)!}m^{n-1}+O(m^{n-2}),
\end{align*}
proving the theorem for the case of blowing-up one point.

For the general case, we use the following notation.
Let $\tilde\pi_i:X_i\to  X$ be the blow-up at $P_i$,  and let $\epsilon_i:\tilde X\to X_{i}$ be such that $\pi=\tilde\pi_i\circ\epsilon_i$.  
Let $\tilde E_i=\tilde\pi_{i}^{-1}(P_i)$ 
be the exceptional divisor of $\tilde\pi_i$ and note that $E_i=\epsilon_i^*\tilde E_i$.  Then we have the following:
$$
h^0(m(\pi^*D-E_i))=h^0(m(\epsilon_i^*(\tilde\pi_i^* D)-\epsilon_i^*\tilde E_i))\ge h^0(m(\tilde\pi_i^*D- \tilde E_i)).
$$
Thus, the general case follows from the previously proved one-point case applied to the blow-ups $\tilde\pi_i:X_i\to X$, $i=1,\ldots, m$.
\end{proof}

\begin{proof}[Proof of Theorem \ref{pxn} and Theorem \ref{pxnA}]
Since a major part of the proofs of the two theorems are the same, we will largely do the proofs together, giving separate arguments when necessary.  
By Lemma \ref{big}, the assumption that $D_i$ is nef and $D_i^n\geq 1$ for all $i$  
implies that $D_i$ is big for each $1\le i\le n$. We first observe that it suffices to
consider when $\cap_{i=1}^n D_i$ is nonempty.
Indeed, suppose that $\cap_{i=1}^n D_i$ is empty.  Then in the non-Archimedean case, by Theorem \ref {ALW} there exists a proper Zariski-closed subset $Z\subset X$ such that the image of any non-constant analytic map $f:\k\to X\setminus D$ is contained in $Z$.  
While in the arithmetic case, by Theorem \ref {Levin} there exists a proper Zariski-closed subset $Z\subset X$ such that for any set $R$ of $\Ocal_k$-integral points on $X\setminus \cup_{i=1}^nD_i$, the set $R\setminus Z$ is finite.  Therefore, we now assume that $\cap_{i=1}^n D_i$ is not empty.

Since $D_1,\ldots,D_n$ intersect transversally, their intersection contains only points.
Let $\cap_{i=1}^n D_i=\{\mathfrak p_1, \cdots ,\mathfrak p_m\}$.
Let $\pi:\tilde X\to X$ be the blow-up along $\mathfrak p_1,...,\mathfrak p_m$, successively, and let 
$E_j:=\pi^{-1}(\mathfrak p_j)$ be the exceptional divisor for $1\le j\le m$.
For   $1\le i\le n$ and $1\le j\le m$, we let $G_{ij}:=\pi^*(D_i)-E_j=\tilde D_i+E_1+\cdots+E_{j-1}+E_{j+1}+\cdots+E_m$,
where $\tilde D_i$ is the strict transform of $D_i$ under $\pi$.  Clearly, $G_{ij}$ is effective and by Theorem \ref{bigness}, $\kappa(G_{ij})>0$.  It's clear from the construction and our transversality assumptions that $\cap_{\scriptstyle 1\le i\le n\atop\scriptstyle  1\le j\le m}  G_{ij}=\emptyset$ and 
$\cup_{\scriptstyle 1\le i\le n\atop\scriptstyle  1\le j\le m} G_{ij}=\cup_{i=1}^n \pi^{-1}(D_i).$
In the non-Archimedean case, the analytic map $f:\k\to X\setminus \cup_{i=1}^nD_i$ lifts to
$\tilde f:\k\to \tilde X\setminus \cup_{\scriptstyle 1\le i\le n\atop\scriptstyle  1\le j\le m}  G_{ij}$
with $f=\pi\circ\tilde f$.
Theorem \ref{ALW} implies that the image of $\tilde f$ is contained in a proper subvariety of $\tilde X$,
hence the image of $f$ is contained in a proper subvariety of $X$.  If $D_i^n>1$ for all $i$, then $G_{ij}$ is big for all $i$ and $j$ by Theorem \ref{bigness}.  Now applying Theorem \ref{ALW} as before yields the desired result.

In the arithmetic case,  every point in the intersection $\cap_{i=1}^n D_i=\{\mathfrak p_1, \cdots ,\mathfrak p_m\}$ is $k$-rational by assumption.
Then $\tilde X$ and all the $G_{ij}$ are defined over $k$ since  $\pi:\tilde X\to X$ is blown up over $k$-rational points.
Under this construction, any set $R$ of $\Ocal_k$-integral points on $X\setminus \cup_{i=1}^nD_i$ 
lifts to a set $\tilde R$ of $\Ocal_k$-integral points on $\tilde X\setminus \cup_{\scriptstyle 1\le i\le n\atop\scriptstyle  1\le j\le m}  G_{ij}$.
By Theorem \ref {Levin}, $\tilde R$ is contained in a proper Zariski-closed subset $\tilde  Z$ of $\tilde X$, and hence $R$ is contained 
in the proper Zariski-closed subset $\pi (\tilde  Z)$ of $X$.
 If $D_i^n>1$ for all $i$, then $G_{ij}$ is big for all $i$ and $j$ by Theorem \ref{bigness}.  Now applying Theorem \ref{Levin} as before yields the desired result.
\end{proof}

\begin{proof}[Proof of Corollary \ref{cor} and  Corollary \ref{corb}]
By Theorem \ref{HI}, for all $i$,
\begin{align*}
(-K_X.D_i^{n-1})^n\geq (-K_X)^n(D_i^n)^{n-1}\geq (-K_X)^n>(n-1)^n.
\end{align*}
Since $-K_X.D_i^{n-1}\geq 0$ for all $i$, this immediately implies that $K_X.D_i^{n-1}<1-n$ for all $i$.
\end{proof}

\section{Proof of Theorem  \ref{rationalsurface} and Theorem  \ref{rationalsurfaceb}}
 
 We first recall the definition and basic properties of  rational ruled surfaces (cf. \cite[Chapter V.2]{Har} and \cite[Chapter IV.1]{Kol}).

\begin{definition}
Let $k$ be a field and $\bar k$ be the algebraic closure of $k$.
A  {\it rational ruled surface}  is  a surface $X$ (over  $\bar k$), together with a surjective morphism $\pi:X\to {\mathbb P^1}$, such that the fiber $X_y$ is isomorphic to 
${\mathbb P^1}$ for every point $y\in {\mathbb P^1}$.  We say $\pi:X\to {\mathbb P^1}$ is  a {\it rational ruled surface defined over $k$} if both $X$  and the morphism $\pi $ are defined over $k$ and $X$ is a rational ruled surface over $\bar k$.
\end{definition}

\begin{proposition}\label{rationalrule}
A rational ruled surface $X$ (over  $\bar k$) is isomorphic to $X_e:= \mathbf {P}({\mathcal O}_{\mathbb P^1}\oplus{\mathcal O}_{\mathbb P^1}(-e))$ over ${\mathbb P^1}$ for some nonnegative integer $e$.  
Let $F$ denote a fiber on $X$ and let $C_0$ denote a section of $X$ such that $\co(C_0)\cong \co_{ X_e }(1)$. 
Then
\begin{enumerate}
\item[\rm{(a)}]  $\Pic X\cong {\mathbb Z} \oplus {\mathbb Z} $ generated by $C_0\subset X$  and $F$   with $C_0^2=-e$, $F^2=0$, and $C_0\cdot F=1$. 
\item[\rm{(b)}]  Let $K_{X}$ be the canonical divisor on $X$.  Then $K_{X}\sim -2C_0-(2+e)F.$  In particular, $K_{X}^2=8$.
\item[\rm{(c)}]  Let $D$ be a divisor on $X$ equivalent to $aC_0+bF$ in $\Pic X$.  Then 
\begin{enumerate}
\item[\rm(i)] If $D$ is an irreducible curve $\not\sim C_0,\, F$, then $a,b>0$, $b\ge ae$, and $D^2>0$.
\item[\rm(ii)] $D$ is big if and only if $a>0$ and $b>0$.
\item[\rm(iii)] $D$ is ample if and only if $a>0$ and $b>ae$.
\end{enumerate}
\end{enumerate}
\end{proposition}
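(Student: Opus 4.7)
The plan is to reduce Proposition \ref{rationalrule} to a handful of intersection-theoretic computations on top of standard structural results from Hartshorne V.2. For the classification part, I would invoke Grothendieck's splitting of vector bundles on $\mathbb{P}^1$ together with the standard normalization of the projectivization, producing $\mathcal{E}\cong \mathcal{O}\oplus \mathcal{O}(-e)$ with $e\geq 0$; this is essentially Hartshorne V.2.8--V.2.9. Part (a) is then immediate from Hartshorne V.2.3: the Picard group structure and the intersection numbers $C_0^2=-e$, $F^2=0$, $C_0\cdot F=1$ follow from the defining relation $C_0^2 = -\deg \mathcal{E}$ for a normalized ruled surface.

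For (b), I would write $K_X\sim \alpha C_0+\beta F$ and apply adjunction to $F\cong \mathbb{P}^1$ and to $C_0\cong \mathbb{P}^1$ (both have genus zero). The equations $K_X\cdot F=-2$ and $K_X\cdot C_0=e-2$ combined with the intersection table from (a) force $\alpha=-2$ and $\beta=-(2+e)$; expanding $K_X^2$ then gives $8$ in one line.

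For (c)(i), the central observation is that an irreducible curve $D\not\sim F$ dominates $\mathbb{P}^1$ under $\pi$ (otherwise it lies in, hence equals, a single fiber), so $a=D\cdot F\geq 1$; likewise $D\neq C_0$ yields $b-ae=D\cdot C_0\geq 0$, i.e., $b\geq ae$. The strict inequality $b>0$ when $e=0$ requires the separate elementary observation that on $\mathbb{P}^1\times\mathbb{P}^1$ an irreducible curve of bidegree $(a,0)$ is cut out by a polynomial in only one pair of homogeneous variables, which factors completely into linear forms; hence it must be a single fiber of the other projection, forcing $D\sim C_0$. The bound $D^2=a(2b-ae)>0$ then follows by splitting on the cases $e=0$ and $e\geq 1$.

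For (c)(ii) I would use (c)(i) to identify the effective cone of $X_e$ with $\{(a,b):a,b\geq 0\}$ (decomposing any effective divisor into irreducible components), so the big cone is the open quadrant $\{a,b>0\}$. For (c)(iii) I would invoke the Nakai--Moishezon criterion: given (c)(i), positivity of $D$ against $F$, $C_0$, and an arbitrary irreducible curve $D'\sim a'C_0+b'F$ with $a', b'>0$ and $b'\geq a'e$ collapses precisely to the two conditions $a>0$ and $b>ae$, with $D^2>0$ automatic. I expect the only genuinely non-formal step to be the $e=0$ subcase of (c)(i); everything else is either a direct reference to Hartshorne V.2 or a single line of intersection arithmetic.
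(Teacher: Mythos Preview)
The paper does not actually prove Proposition~\ref{rationalrule}; it is presented as a summary of standard facts, prefaced only by ``cf.\ \cite[Chapter V.2]{Har} and \cite[Chapter IV.1]{Kol}'' and followed immediately by a remark and the proof of Theorem~\ref{rationalsurface}. There is therefore no argument in the paper to compare against.

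Your sketch is correct and is exactly the standard route through Hartshorne V.2 (specifically V.2.3 and V.2.8--9 for the classification and (a), V.2.11 for (b), V.2.20--21 for (c)(i) and (c)(iii)), which is what the paper's citation is pointing to. The one place you genuinely add something beyond a direct citation is the $e=0$ subcase of (c)(i): Hartshorne V.2.20(a) only yields $b\geq ae=0$ there, and your factorization argument on $\mathbb{P}^1\times\mathbb{P}^1$ correctly upgrades this to $b>0$ when $D\not\sim C_0$. For (c)(ii) your identification of the big cone with the open first quadrant is right; if you want to avoid invoking the general fact that the big cone is the interior of the pseudoeffective cone, you can argue directly that for $a,b>0$ some multiple $mD$ decomposes as $(C_0+(e+1)F)+\text{(effective)}$, hence is ample plus effective, while $a\leq 0$ or $b\leq 0$ forces $h^0(mD)$ to grow at most linearly.
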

\begin{remark}
When $e=0$, $X_0$ is isomorphic to ${\mathbb P^1}\times {\mathbb P^1}$; when $e=1$, $X_e=X_1$ is $\mathbb{P}^2$ blown up at one point.  
\end{remark}

 \begin{proof}[Proof of Theorem \ref{rationalsurface}]
A rational ruled surface $X$ is isomorphic to $X_e$ with $e\ge 0$  as described in Proposition \ref{rationalrule}.    Let $D_1$ and $D_2$ be big effective divisors on $X$.  Suppose first that $D_1$ and $D_2$ have irreducible components $E_1$ and $E_2$, respectively, with $E_i\not\sim C_0,F$, for $i=1,2$.  Then by Proposition \ref{rationalrule}, $E_i^2\geq 1$ for $i=1,2$.  Let $K_{X}$ be the canonical divisor on $X$.  By the Hodge index theorem (or direct calculation), $(K_{X}.E_i)^2\geq K_X^2 E_i^2\geq K_X^2=8$ and $K_{X}.E_i\leq -3$.   Then by Theorem \ref{pxn},  the image of an analytic map $f:\k\to X\setminus (D_1\cup D_2)\subset X\setminus (E_1\cup E_2)$ is contained in a proper subvariety of $X$.  
  Suppose now that, say, $D_1$ has every irreducible component linearly equivalent to either $C_0$ or $F$.  Since $D_1$ is big, by Proposition \ref{rationalrule}, $D_1$ must contain at least two irreducible components $C$ and $F'$ with $C\sim C_0$ and $F'$ a fiber.  Since $D_2$ is linearly equivalent to a positive integral linear combination of $C$ and $F'$,  there exists a non-constant function $\phi\in \k(X)^*$ with poles and zeros only in the support of $D_1$ and $D_2$.  Consider an analytic map $f:\k\to X\setminus (D_1\cup D_2)$.  Then $\phi\circ f:\k\to \mathbb{A}^1\setminus \{0\}$ is analytic, and hence constant.  It follows that the image of $f$ is contained in a proper subvariety of $X$.

Assume furthermore that $D_1$ and $D_2$ are ample.  By (a), the image of a non-constant analytic map $f:\k\to X\setminus (D_1\cup D_2)$ is contained in a curve in $X$.   Let $C$ be the Zariski closure of the image of $f$ in $X$.  
If $C\cap (D_1\cup D_2)$ contains more than one point, then $f$ must be constant.  On the other hand, $D_i\cap C\ne\emptyset$ for $i=1,2$, since $D_1$ and $D_2$ are ample.
Therefore, we only need to consider when $C\cap  D_1=C\cap  D_2 =\{x\} $ for some $x\in X$. 

Let $\pi: \tilde X\to X$ be the blow-up of $X$ at $x$ with exceptional divisor $E$.   Let $\tilde C$ be the strict transform of $C$ and $\tilde D_i$ the strict transform of $D_i$, $i=1,2$.  Then $f:\k\to X\setminus D_1\cup D_2$ lifts to
$\tilde f:\k\to \tilde X\setminus \tilde D_1\cup\tilde D_2$
with $f=\pi\circ\tilde f$ and the image of $\tilde f$ is contained in $\tilde C$.
Denote by $m=m_x( C)$ the multiplicity of $C$ at $x$.  If $(C.D_i)_x>m=m_x( C)\cdot m_x(D_i)$  for $i=1,2$,
then each $\tilde{D}_i$ must intersect $\tilde C$ at some point on $\tilde X$ lying above $x$.  Since $D_1$ and $D_2$ intersect transversally, $\cap_{i=1}^2 \tilde D_i\cap E=\emptyset$.  Thus, there must be at least two points on $\tilde C$ lying above $x$.  Consequently,  $\tilde f:\k\to \tilde X\setminus \tilde D_1\cup\tilde D_2$ is constant by Theorem \ref{Pic} and hence $f$ is also constant.
Therefore, it remains to consider when $(C.D_i)_x=m$ for $i=1$ or $i=2$.  

Without loss of generality, let $(C.D_1)_x=m$. 
Suppose that $C$ is linearly equivalent to $cC_0+dF$ and $D_1$ is linearly equivalent to $a C_0+b F$.
 Proposition \ref{rationalrule} implies that the intersection multiplicity is given by
 $$
 (C.D_1)_x=C.D_1=a d+c(b -a e).
 $$
 
Assume that $C$ is not linearly equivalent to $C_0$ or $F$.  From Proposition \ref{rationalrule}, we have that  $a,b-ae,c, d>0$.
By taking $F$ to be the fiber passing through $x$, we see that  $c=C.F\ge m $.
Then 
\begin{align*}
m=C.D_1=a d+c(b -a e)>c\geq m,
\end{align*}
a contradiction.  This proves the first statement in \ref{rsb}.  The second statement follows from Theorem \ref{Pic} and the observation that if some fiber or some section $C\sim C_0$ intersects $D_1\cup D_2$ in exactly one point, then since every fiber and every section is isomorphic to $\mathbb{P}^1$ and $\mathbb{A}^1$ admits a non-constant analytic map, there exists a non-constant analytic map from $\k$ to $X\setminus D_1\cup D_2$.

For the last statement, note that any fiber or section must intersect one of $D_1$ or $D_2$ transversally at $x$, as any fiber or section is nonsingular and $D_1$ and $D_2$ intersect transversally.  If $D_i.F\ge 2$ and $D_i.C_0\ge 2$ for $i=1,2$, then this implies that every fiber and every section intersects $D_1\cup D_2$ in more than one point.
\end{proof}
 \begin{proof}[Proof of Theorem \ref{rationalsurfaceb}]
We will use the same notation as in the proof of Theorem \ref{rationalsurface} and only give arguments that are different from the non-Archimedean case.  We note that the section $C_0$ may not be defined over $k$ (and consequently, we work with linear equivalence over $\bar{k}$ below).  Let $D_1$ and $D_2$ be big effective divisors on $X$.  Suppose first that $D_1$ and $D_2$ have irreducible components $E_1$ and $E_2$, respectively, with $E_i\not\sim C_0,F$, for $i=1,2$.  We have shown that  $E_i^2\geq 1$ for $i=1,2$  and $K_{X}.E_i\leq -3$.  
  Then by Theorem \ref{pxnA}, any set $R$ of $\Ocal_k$-integral points on $X\setminus (D_1\cup D_2)\subset X\setminus (E_1\cup E_2)$ is contained in a proper Zariski-closed subset of $X$.    Suppose now that, say, $D_1$ has every irreducible component linearly equivalent to either $C_0$ or $F$.  Since $D_1$ is big, by Proposition \ref{rationalrule}, $D_1$ must contain at least two irreducible components $C$ and $F'$ with $C\sim C_0$ and $F'$ a fiber.  Since $D_2$ is linearly equivalent to a positive integral linear combination of $C$ and $F'$, there exists a non-constant function $\phi\in \bar \QQ(X)^*$ with poles and zeros only in the support of $D_1$ and $D_2$.  
Since all the irreducible components of $D_1$ and $D_2$ are defined over $k$, we may take $\phi$ to be defined over $k$.  Then $\phi(R)$ is a set of  $\Ocal_k$-integral points on $ \mathbb{A}^1\setminus \{0\}$  which is finite.  Consequently, we may write
$\phi(R)=\{\alpha_1,\hdots,\alpha_m\}\subset k\setminus \{0\}$.   It follows that the image of $R$ is contained in a proper subvariety of $X$  given by  $\cup_{i=1}^m\{\frak p\in X| \, \phi(\frak p)=\alpha_i\}$.

Assume furthermore that $D_1$ and $D_2$ are ample.  
Suppose that $R$ is an infinite set of $\Ocal_k$-integral points on $X\setminus (D_1\cup D_2)$.
By (a), one can find a Zariski closed curve $C$  containing an infinite subset $R'$ of $R$.
Then $C$ must be defined over $k$ and $C\cap (D_1\cup D_2)$ contains at most two points by Siegel's theorem.
If $C\cap (D_1\cup D_2)$ contains exactly two points $\frak p$, $\frak q$, then either $\frak p$ and $\frak q$ are $k$-rational points or they are conjugate to each other.  Since $C$, $D_1$ and $D_2$ are defined over $k$, the second case implies that 
$C\cap D_1=C\cap D_2=\{\frak p, \frak q\}$ and hence $\{\frak p, \frak q\}\subset D_1\cap D_2$ contradicting our assumption that $D_1\cap D_2$ contains only $k$-rational points.  The remaining case that $\frak p$ and $\frak q$ are both $k$-rational points is impossible due to Theorem~\ref{Levin}\ref{Levinc}.  Therefore, $C\cap (D_1\cup D_2)$ contains at most one point.
On the other hand, $D_i\cap C\ne\emptyset$ for $i=1,2$, since $D_1$ and $D_2$ are ample.
Therefore, we only need to consider when $C\cap  D_1=C\cap  D_2 =\{x\} $ for some $x\in X$. 
Moreover,  $x$ is a $k$-rational point since it is a point in $  D_1 \cap  D_2$. 

Let $\pi: \tilde X\to X$ be the blow-up of $X$ at $x$ with exceptional divisor $E$.   Let $\tilde C$ be the strict transform of $C$ and $\tilde D_i$ the strict transform of $D_i$, $i=1,2$.  
We note that  $\tilde X$, $\tilde C$, and $\tilde D_i$ are all defined over $k$ since $x$ is $k$-rational.
Then this infinite set $R'$ of $\Ocal_k$-integral points on $X\setminus (D_1\cup D_2)$ lifts
to an infinite set $\tilde R'$ of $\Ocal_k$-integral points on $\tilde X\setminus \tilde D_1\cup\tilde D_2$ with $R'=\pi(\tilde R')$.  
Denote by $m=m_x( C)$ the multiplicity of $C$ at $x$.  If $(C.D_i)_x>m=m_x( C)\cdot m_x(D_i)$  for $i=1,2$,
then each $\tilde{D}_i$ must intersect $\tilde C$ at some point on $\tilde X$ lying above $x$.  Since $D_1$ and $D_2$ intersect transversally, $\cap_{i=1}^2 \tilde D_i\cap E=\emptyset$.  Thus, there must be at least two points on $\tilde C$ lying above $x$.  
As before, by Siegel's theorem, it suffices to consider when there are exactly two points in $\tilde C\cap  (\tilde D_1\cup\tilde D_2)$.
The above construction shows that this can only happen if $\tilde C\cap  \tilde D_1=\{\frak p\}$, $\tilde  C\cap  \tilde D_2=\{\frak q\}$, and $\frak p\ne\frak q$.  Since $\tilde C$ and $\tilde D_i$, $i=1,2$, are defined over $k$, $\frak p$ and $\frak q$ must be $k$-rational points.  However, Theorem  \ref{Levin}\ref{Levinc}  implies that any set of  $\Ocal_k$-integral points on $\tilde C\setminus (\tilde D_1\cup\tilde D_2)$ is finite which yields a contradiction.
Therefore, it remains to consider when $(C.D_i)_x=m$ for $i=1$ or $i=2$.  
The same argument as in the proof of Theorem \ref{rationalsurface} shows that $C$ is linearly equivalent to $C_0$ or $F$ under this assumption.
 This proves the first statement in \ref{rsb}.  

For the second statement, we continue our argument from above and consider when $C$ is linearly equivalent to $C_0$ or $F$.  Suppose that $C$ intersects 
$D_1\cup D_2$ in at least two points.  If there are two $k$-rational points or more than two points in $C\cap \{D_1\cup D_2\}$,
then we get a contradiction again by Theorem  \ref{Levin}\ref{Levinc} or by Siegel's theorem.
The only case left is $C\cap \{D_1\cup D_2\}=\{\frak p,\frak q\}$ and one of the points is not $k$-rational.
Since $C$ and the $D_i$ are defined over $k$, this implies that $\frak{p}$ and $\frak{q}$ are conjugate over $k$ and $C\cap D_1=C\cap D_2=\{\frak p,\frak q\}$.  Then $\{\frak p,\frak q\}\subset D_1\cap D_2$, which contradicts our assumption that the intersection points of $D_1$ and $D_2$ are $k$-rational.  Therefore, the cardinality of $R$ must be finite.
For the converse direction of the second statement, we first observe that if  an irreducible curve $C$ defined over $k$  intersects $D_1\cup D_2$ in exactly one point, then this point must be $k$-rational.  Furthermore, if $C$ is linearly equivalent to $F$ or $C_0$ then $C$ is a fiber or a section which is isomorphic to $\mathbb{P}^1$.  Indeed, $C$ is $k$-isomorphic to $\mathbb{P}^1$ since $C$ is defined over $k$ and $C(k)$ is not empty.  Since
 $\mathbb{A}^1$ admits infinitely many $\Ocal_k$-integral points, there exists infinitely many $\Ocal_k$-integral points on $C\setminus (D_1\cup D_2)\subset X\setminus (D_1\cup D_2)$. 
 
For the last statement, the proof is the same as in Theorem \ref{rationalsurface}.
\end{proof}

\end{document}